\newtheorem{theorem}{Theorem}
\newtheorem{corollary}{Corollary}
\newtheorem{definition}{Definition}
\newtheorem{remark}{Remark}
\newcommand{\bfA}{\mathbf{A}} 
\newcommand{\bfB}{\mathbf{B}} 
\newcommand{\bfa}{\mathbf{a}}
\newcommand{\bfk}{\mathbf{k}}
\newcommand{\Tfett}{{\bf T}}
\newcommand{\bsalpha}{\boldsymbol{\alpha}}
\newcommand{\bsbeta}{\boldsymbol{\beta}}
\newcommand{\bsx}{\boldsymbol{x}}
\newcommand{\bsy}{\boldsymbol{y}}
\newcommand{\Ccal}{{\cal C}}
\newcommand{\cN}{{\cal N}}
\newcommand{\bszero}{\boldsymbol{0}}
\newcommand{\NN}{\mathbb{N}}
\newcommand{\ZZ}{\mathbb{Z}}
\newcommand{\RR}{\mathbb{R}}
\newcommand{\cS}{\mathcal S}
\newcommand{\FF}{\mathbb{F}}
\newcommand{\cP}{{\mathscr P}}
\newcommand{\rdots}{\mathinner{\mkern1mu\lower-1\p@\vbox{\kern7\p@\hbox{.}}
\mkern2mu \raise4\p@\hbox{.}\mkern2mu\raise7\p@\hbox{.}\mkern1mu}}
\begin{document}

\title{On the existence of hyperplane sequences, \\with quality parameter and
discrepancy bounds}

\author{Friedrich Pillichshammer and Gottlieb Pirsic
\thanks{The research of G.Pirsic was supported by the Austrian Science Fund, Project P23285-N18}
}

\date{}
\maketitle

\begin{abstract}
It is well-known that digital $(t,m,s)$-nets and $(\Tfett,s)$-sequences over a finite field have excellent properties when they are used as underlying nodes in quasi-Monte Carlo integration rules. One very general sub-class of digital nets are hyperplane nets which can be viewed as a generalization of cyclic nets and of polynomial lattice point sets. In this paper we introduce infinite versions of hyperplane nets and call these sequences hyperplane sequences. Our construction is based on the recent duality theory for digital sequences according to Dick and Niederreiter. We then analyze the equidistribution properties of hyperplane sequences in terms of the quality function $\Tfett$ and the star discrepancy.
\end{abstract}

\section{Introduction}

In the field of quasi-Monte Carlo (QMC) integration one approximates the integral of a function over the (usually high-dimensional) unit-cube by the average of function evaluations over a well-chosen deterministic point set. Such an integration rule is called a QMC-algorithm. The fundamental error estimate for this method is the Koksma-Hlawka inequality which states that the absolute error of a QMC-algorithm is bounded by the variation in the sense of Hardy and Krause of the integrand times the star discrepancy of the underlying point set. Hence, point sets from the unit-cube having a very uniform distribution, or in other words, having low star discrepancy yield small integration errors, at least for functions with bounded variation in the sense of Hardy and Krause. For more information on QMC we refer to the books \cite{DP2010,niesiam}. 

The star discrepancy of a point set $\cP=\{\bsx_0,\ldots,\bsx_{N-1}\}$ consisting of $N$ points in $[0,1)^s$ is defined as $$D_N^{\ast}(\cP)=\sup_{J}\left|\frac{A(J,N)}{N}-\lambda_s(J)\right|,$$ where the supremum is extended over all intervals of the form $$J=[0,a_1)\times \ldots \times [0,a_s) \subseteq [0,1)^s,$$ where $A(J,N)$ is the number of indices $n$ for which $\bsx_n$ belongs to $J$ and $\lambda_s(J)$ is the $s$-dimensional volume of $J$. For an infinite sequence $\cS=(\bsy_{n})_{n \ge 0}$ for $N \in \NN$ the star discrepancy $D_N^{\ast}(\cS)$ is the star discrepancy of the point set consisting of the first $N$ elements of $\cS$.

Currently the best constructions of point sets with low star discrepancy are based on the concept of $(t,m,s)$-nets in base $b$ as introduced by Niederreiter \cite{nie87} (see also \cite{DP2010} and \cite[Chapter~4]{niesiam} for surveys of this theory).
\begin{definition}[$(t,m,s)$-nets]\rm
Let $b \ge 2$, $s \ge 1$ and $0 \le t \le m$ be integers. A point set $\cP$ consisting of $b^m$ points in $[0,1)^s$ forms a {\it $(t,m,s)$-net in base $b$}, if every subinterval of the form $\prod_{i=1}^s [a_i b^{-d_i},(a_i+1) b^{-d_i})$ of $[0,1)^s$, with integers $d_i \ge 0$ and integers $0 \le a_i < b^{d_i}$ for $i =1,\ldots, s$ and of volume $b^{t-m}$, contains exactly $b^t$ points of $\cP$.
\end{definition}
In the definition above, $t$ is often called the {\it quality parameter} of the net. It is clear from the definition that any point set consisting of $b^m$ elements from $[0,1)^s$ is at least an $(m,m,s)$-net in base $b$. Smaller values of $t$ imply stronger equidistribution properties for nets.

Infinite versions of $(t,m,s)$-nets are $(t,s)$-sequences as introduced by Niederreiter~\cite{nie87} and their generalizations $(\Tfett,s)$-sequences as introduced by Larcher and Niederreiter~\cite{LN95}. We present here a revised version introduced in \cite{NieXi96} and occasionally called `digital sequences in the broad sense' that eliminates certain problems with ambiguous digit expansions. 
For this we introduce the truncation operator 
\[ [x]_{b,m}:=\sum_{i=1}^{m}x_{i}b^{-i} ,\quad \text{ where }x=\sum_{i\ge 1}x_{i}b^{-i}\in[0,1).\] 
Note that the definition is dependent on the given expansion.
 We also extend this operator to act on vectors by component-wise application.

\begin{definition}[$(\Tfett,s)$-sequences]\rm
Let $b \ge 2$, $s \ge 1$ be integers and let $\Tfett: \NN_0 \rightarrow \NN_0$ be a function which satisfies $\Tfett(m) \le m$ for all $m \in \NN_0$. A sequence $\cS=(\bsx_0,\bsx_1,\ldots)$ of points in $[0,1)^s$ forms a {\it $(\Tfett,s)$-sequence in base $b$}, if for all $m,k \in \NN_0$, the point set consisting of the points $[\bsx_{k b^m}]_{b,m},\ldots,[\bsx_{(k+1) b^m -1}]_{b,m}$ forms a $(\Tfett(m),m,s)$-net in base $b$.  
\end{definition}
In the definition above, $\Tfett$ is often called the {\it quality function} of the sequence. Any sequence in $[0,1)^s$ is at least a $(\Tfett,s)$-sequence in base $b$ with $\Tfett(m)=m$ for all $m \in \NN_0$. Smaller values of $\Tfett$ imply stronger equidistribution properties for sequences. A $(\Tfett,s)$-sequence in base $b$ is called a {\it strict} $(\Tfett,s)$-sequence in base $b$ if 
the value of $\Tfett(m)$ can not be decreased at least by $1$ for any $m\in\NN_{0}$.
It is known from \cite[Theorem~4.32]{DP2010} that a strict $(\Tfett,s)$-sequence in base $b$ is uniformly distributed modulo one if $$\lim_{m \rightarrow \infty} m-\Tfett(m) =\infty.$$ If there exists a $t \in \NN_0$ such that $\Tfett(m)=t$ for all $m \in \NN_0$ then one speaks also of  $(t,s)$-sequences. Obviously, $(t,s)$-sequences are uniformly distributed modulo one.

Explicit constructions of $(t,m,s)$-nets and $(\Tfett,s)$-sequences are usually based on a digital method over a finite field which will be explained in the following:  

From now on let $q$ be a prime-power and let $\FF_q$ be the finite field of order $q$. Let $\varphi: \{0,1,\ldots,q-1\} \rightarrow \FF_q$ be a fixed bijection with $\varphi(0)=0$.  

\begin{definition}[digital $(t,m,s)$-nets over $\FF_q$]\label{defdignet} \rm Let $s,m \in \NN$. Let $C_1,\ldots ,C_s$ be $m \times m$ matrices over $\FF_q$. For $i=1,\ldots,s$ and for $k \in \{0,1,\ldots,q^m-1\}$ with $q$-adic expansion $k=\kappa_0+\kappa_1 q+\cdots +\kappa_{m-1} q^{m-1}$ with $\kappa_0,\ldots,\kappa_{m-1} \in \{0,1,\ldots,q-1\}$  multiply
the matrix $C_i$ by the vector $\vec{k}=(\varphi(\kappa_0),\ldots,\varphi(\kappa_{m-1}))^{\top}$, i.e., $$C_i  \vec{k}=:(y_{i,1}(k),\ldots
,y_{i,m}(k))^{\top} \in (\FF_q^m)^{\top},$$ and set
$$x_{k,i}:=\frac{\varphi^{-1}(y_{i,1}(k))}{q}+\cdots
+\frac{\varphi^{-1}(y_{i,m}(k))}{q^m}.$$ If for some integer $t$ with $0 \le t
\le m$ the point set consisting of the points $$\bsx_k =
(x_{k,1}, \ldots ,x_{k,s})\;\;\mbox{ for }\;\; k \in  \{0,1,\ldots,q^m-1\}$$ is a
$(t,m,s)$-net in base $q$, then it is called a {\it digital $(t,m,s)$-net over
$\FF_q$}, or, in brief, a {\it digital net}. The matrices $C_1,\ldots,C_s$ are called its
\emph{generating matrices} and the matrix $C=(C_1^{\top}C_2^{\top} \ldots C_s^{\top}) \in \FF_q^{m \times s m}$ is called the {\it overall generating matrix} of the digital net.  
\end{definition}
 
The construction of infinite sequences follows more or less the same lines as above, more detailed:

\begin{definition}[digital $(\Tfett,s)$-sequences over $\FF_q$]\label{defdignet} \rm Let $s\in \NN$. Let $C_1,\ldots ,C_s$ be $\NN \times \NN$ matrices over $\FF_q$. For $i=1,\ldots, s$ and for $k \in \NN_0$ with $q$-adic expansion $k=\kappa_0+\kappa_1 q+\cdots$ with $\kappa_0,\kappa_1,\ldots \in \{0,1,\ldots,q-1\}$ multiply
the matrix $C_i$ by the vector $\vec{k}=(\varphi(\kappa_0),\varphi(\kappa_1),\ldots)^{\top}$, i.e., $$C_i  \vec{k}=:(y_{i,1}(k),y_{i,2}(k),\ldots)^{\top} \in (\FF_q^{\NN})^{\top},$$ and set
$$x_{k,i}:=\frac{\varphi^{-1}(y_{i,1}(k))}{q}+\frac{\varphi^{-1}(y_{i,2}(k))}{q^2}+\cdots.$$ If for some function $\Tfett:\NN_0 \rightarrow \NN_0$ with $\Tfett(m) \le m$ for all $m \in \NN_0$ the sequence consisting of the points $$\bsx_k =
(x_{k,1}, \ldots ,x_{k,s})\;\;\mbox{ for }\;\; k \in  \NN_0$$ is a
$(\Tfett,s)$-sequence in base $q$, then it is called a {\it digital $(\Tfett,s)$-sequence over
$\FF_q$}, or, in brief, a {\it digital sequence}. The matrices $C_1,\ldots,C_s$ are called its \emph{generating matrices}.  
\end{definition}

It is known from \cite[Theorem~4.86]{DP2010} that a strict digital $(\Tfett,s)$-sequence in base $b$ is uniformly distributed modulo one, if and only if $$\lim_{m \rightarrow \infty} m-\Tfett(m) =\infty.$$

Many constructions of digital nets are inspired by a close connection between coding theory and the theory of digital nets (see, for example, Niederreiter \cite{Ni2004,N05,N06a}). Examples for that are the so-called $(u,u+v)$-construction (see \cite{BES02,np}), the matrix-product construction (see \cite{NieOe04}) and the Kronecker-product construction (see \cite{BES02,NiePir02}). Here we deal with a construction for digital nets which is an analog to a special type of codes, namely to cyclic codes which are well known in coding theory. This construction has been introduced by Niederreiter in \cite{Ni2004} who adopted the viewpoint that cyclic codes can be defined by prescribing roots of polynomials. Later this construction has been generalized by Pirsic, Dick and Pillichshammer \cite{PDP} to so-called hyperplane nets whose definition will be given in Section~\ref{secHNet} where also some results on the quality parameter are recalled.

It is the aim of this paper to introduce a corresponding construction also for digital sequences and to prove some results concerning the equidistribution of the resulting sequences. Such a construction can be obtaind with the help of duality theory for digital sequences as recently introduced by Dick and Niederreiter~\cite{DN2009}. 

The paper is organized as follows: In Section~\ref{sec:dlty} we recall the basic facts from duality theory for digital nets and sequences. This theory will be an important tool in our construction. Furthermore, in Section~\ref{secHNet} we recall the definition of and some basic results on hyperplane nets. The construction of what we call hyperplane sequences will be presented in Section~\ref{defhypseq}. In this section we also show how to determine the quality function of hyperplane sequences and we prove an `if and only if' condition under which hyperplane sequences are uniformly distributed modulo one. In Section~\ref{secEGM} we determine the generator matrices of hyperplane sequences and in Section~\ref{secRelLN} we compare hyperplane sequences to Larcher-Niederreiter sequences. Last but not least, in Section~\ref{secExRes}, we show the existence of hyperplane sequences which satisfy $\Tfett(m) =s \log_q m +2\log_q\log m+O(1)$. This in turn implies the existence of hyperplane sequences which satisfy a certain bound on the star discrepancy.

\section{Duality Theory}\label{sec:dlty}

Duality theory is an important tool for the construction of digital nets and sequences and provides a connection to the theory of linear codes. It has been introduced by Niederreiter and Pirsic~\cite{NiePir02} for digital nets and by Dick and Niederreiter~\cite{DN2009} for digital sequences. We also refer to \cite[Chapter~7]{DP2010} for an overview. In the following we recall the basic facts which will be necessary for the construction of hyperplane sequences in Section~\ref{defhypseq}.

\subsection{Duality Theory for Digital Nets}

Let $\cN$ be a $\FF_q$-linear subspace of $\FF_q^{sm}$. Then its dual space $\cN^{\bot} \subseteq \FF_q^{sm}$ is its orthogonal complement relative to the standard inner product in $\FF_q^{sm}$, i.e., $$\cN^{\bot}=\{\bfA \in \FF_q^{sm}\ : \ \bfB \cdot \bfA =0 \mbox{ for all } \bfB \in \cN\}.$$

\begin{definition}[NRT weight]\rm \label{defNRTweight}
For $\bfa=(a_1,\ldots ,a_m)\in \FF_q^m$ let $$v_m(\bfa)=\left\{
\begin{array}{ll}
0 & \mbox{ if } \bfa =\bszero,\\
\max\{j \, : \, a_j \not=0\} & \mbox{ if } \bfa \not=\bszero.
\end{array}\right.$$ 
We extend this definition to $\FF_q^{sm}$ by writing $\bfA \in \FF_q^{sm}$ as the concatenation of $s$ vectors of length $m$, i.e., $\bfA=(\bfa_1,\ldots,\bfa_s) \in \FF_q^{sm}$ with $\bfa_i\in \FF_q^m$ for $i=1,\ldots, s$, and putting $$V_m(\bfA)=\sum_{i=1}^s v_m(\bfa_i).$$ The weight $V_m$ is called the {\it Niederreiter-Rosenbloom-Tsfasman weight}. 
\end{definition}

\begin{definition}[minimum distance]\label{defNRTmindist}\rm
Let $\cN \not=\{\bszero\}$ be a $\FF_q$-linear subspace of $\FF_q^{sm}$. Then the {\it minimum distance} of $\cN$ is defined by $$\delta_m(\cN)=\min\{V_m(\bfA) \, : \, \bfA \in \cN \setminus \{\bszero\}\}.$$ Furthermore, we put $\delta_m(\{\bszero\})=sm+1$ by convention.
\end{definition}

The following theorem has been first shown by Niederreiter and Pirsic~\cite{NiePir02}. It can be also found as \cite[Corollary~7.12]{DP2010}.
\begin{theorem}
Let $m,s \in \NN$, $s \ge 2$. Then, from any $\FF_q$-linear subspace $\cN$ of $\FF_q^{sm}$ with $\dim(\cN) \ge sm-m$ one can construct generating matrices $C_1,\ldots,C_s$ of a strict digital $(t,m,s)$-net over $\FF_q$ such that $$t=m-\delta_m(\cN)+1.$$ The row space $\Ccal$ of the overall generating matrix $C$ satisfies $\Ccal=\cN^{\bot}$.
\end{theorem}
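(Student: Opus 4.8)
The plan is to reduce the assertion to the classical linear-algebra characterization of the digital net property and then to read off the matrices from the duality between the row space of the overall generating matrix and its orthogonal complement. I would start by recalling Niederreiter's criterion: writing $\bsc^{(i)}_j \in \FF_q^m$ for the $j$-th row of a generating matrix $C_i$, the matrices $C_1,\ldots,C_s$ generate a digital $(t,m,s)$-net over $\FF_q$ if and only if for every choice of integers $d_1,\ldots,d_s \ge 0$ with $d_1+\cdots+d_s = m-t$ the vectors $\{\bsc^{(i)}_j : 1\le i\le s,\ 1\le j\le d_i\}$ are linearly independent over $\FF_q$; since a subset of an independent set is independent and $m-t\le sm$, this is equivalent to requiring the same for all $(d_1,\ldots,d_s)$ with $d_1+\cdots+d_s\le m-t$, and the net is \emph{strict} precisely when $m-t$ is the largest integer for which this holds.

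Next I would pin down the dual object. For generating matrices $C_1,\ldots,C_s$ form the overall generating matrix $C=(C_1^{\top}C_2^{\top}\cdots C_s^{\top}) \in \FF_q^{m\times sm}$. The $n$-th row of $C$ lists, block after block, the $n$-th coordinates of $\bsc^{(i)}_1,\ldots,\bsc^{(i)}_m$; hence for $\bfA=(\bfa_1,\ldots,\bfa_s)\in\FF_q^{sm}$ with $\bfa_i=(a_{i,1},\ldots,a_{i,m})$ we have $C\bfA^{\top}=\bszero$ if and only if $\sum_{i=1}^s\sum_{j=1}^m a_{i,j}\bsc^{(i)}_j=\bszero$. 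Thus the right kernel of $C$ — which is $\Ccal^{\bot}$ for $\Ccal$ the row space of $C$ — consists exactly of the coefficient vectors of linear dependences among the $\bsc^{(i)}_j$. Writing $\cN:=\Ccal^{\bot}$, a nonzero $\bfA\in\cN$ with $v_m(\bfa_i)=d_i$ for each $i$ is precisely a non-trivial dependence among the rows $\bsc^{(i)}_j$ with $j\le d_i$, and $V_m(\bfA)=d_1+\cdots+d_s$. Consequently the independence condition of the previous paragraph holds at threshold $m-t$ for all admissible $(d_1,\ldots,d_s)$ if and only if every nonzero $\bfA\in\cN$ satisfies $V_m(\bfA)\ge m-t+1$, i.e.\ $\delta_m(\cN)\ge m-t+1$. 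Taking the smallest admissible $t$ shows that $C_1,\ldots,C_s$ generate a strict digital $(t,m,s)$-net with $t=m-\delta_m(\cN)+1$ — strictness because $\delta_m(\cN)$ is actually \emph{attained} by some nonzero vector of $\cN$, which rules out the parameter $t-1$ — and $\Ccal=\cN^{\bot}$ since $\cN^{\bot\bot}=\cN$ over $\FF_q$.

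It remains to realize a prescribed $\cN$. Given an $\FF_q$-linear $\cN\subseteq\FF_q^{sm}$ with $\dim\cN\ge sm-m$, we have $\dim\cN^{\bot}=sm-\dim\cN\le m$, so there is an $m\times sm$ matrix $C$ whose rows span $\cN^{\bot}$ (padding with zero rows if $\dim\cN^{\bot}<m$). Split $C$ into $s$ consecutive blocks of $m$ columns, $C=(B_1B_2\cdots B_s)$ with $B_i\in\FF_q^{m\times m}$, and set $C_i:=B_i^{\top}$. Then the overall generating matrix of $C_1,\ldots,C_s$ equals $C$, so $\Ccal=\cN^{\bot}$ and $\Ccal^{\bot}=\cN$, and the previous paragraph applies verbatim. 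A short check confirms $0\le t\le m$: clearly $t\le m$ from $\delta_m(\cN)\ge 1$, and since $s\ge 2$ the subspace $W=\{\bfA : a_{2,j}=0\text{ for }j\ge 2,\ \bfa_i=\bszero\text{ for }i\ge 3\}$ has dimension $m+1>\dim\cN^{\bot}$, hence $\cN\cap W\ne\{\bszero\}$, which yields a nonzero $\bfA\in\cN$ with $V_m(\bfA)\le m+1$, so $\delta_m(\cN)\le m+1$ and $t\ge 0$.

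The part I expect to be fiddly is the bookkeeping of the middle paragraph: matching the ``rows of $C_i$'' that appear in Niederreiter's net criterion with the ``columns of the stacked matrix $C$'', and keeping straight which of $\Ccal$, $\Ccal^{\bot}$, $\cN$, $\cN^{\bot}$ is which — a stray transpose or a stray $\bot$ falsifies the statement. The only genuinely two-sided point is strictness: one direction, $\delta_m(\cN)\ge m-t+1$, gives the net property, while the reverse — that $\delta_m(\cN)$ is the true minimum, realized by an honest nonzero element of $\cN$ — is what forbids a smaller quality parameter. Everything else is routine finite-dimensional linear algebra over $\FF_q$.
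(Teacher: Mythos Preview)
The paper does not actually prove this theorem: it is quoted as a known result of Niederreiter and Pirsic (and \cite[Corollary~7.12]{DP2010}) and stated without proof, so there is no ``paper's own proof'' to compare against. That said, your argument is correct and is essentially the standard one given in those references: reduce the $(t,m,s)$-net property to Niederreiter's linear-independence criterion on initial rows of the $C_i$, observe that the right kernel of the overall generating matrix $C$ encodes exactly the dependence relations and that the NRT weight of a kernel vector records which initial rows are involved, and then realise a prescribed $\cN$ by choosing $C$ with row space $\cN^{\bot}$. Your handling of strictness (the minimum distance is attained, ruling out $t-1$) and of the range $0\le t\le m$ is also the standard one; the only cosmetic wrinkle is that a dependence among $\{\bsc^{(i)}_j:j\le d_i\}$ corresponds to $\bfA$ with $v_m(\bfa_i)\le d_i$ rather than $=d_i$, but your subsequent sentence (``$\delta_m(\cN)\ge m-t+1$'') uses exactly the right inequality, so the slip is purely expository.
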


\subsection{Duality Theory for Digital Sequences}

A digital sequence over $\FF_q$ is fully determined by its generating matrices $C_1,\ldots, C_s \in \FF_q^{\NN \times \NN}$. For $m \in \NN$ we denote the $m \times m$ left-upper sub-matrix of $C_i$ by $C_i^{(m)}$. The matrices $C_1^{(m)},\ldots, C_s^{(m)}$ are then the generating matrices of a digital $(t,m,s)$-net over $\FF_q$. The overall generating matrix of this digital net is defined by
$$C^{(m)}=(C_1^{(m)\top} C_2^{(m)\top}\ldots C_s^{(m)\top})\in \FF_q^{m \times sm}$$ for any $m \in \NN$.

Hence a digital sequence can equivalently be described via the sequence $C^{(1)},
C^{(2)}, \ldots$ of overall generating matrices or the sequence $\Ccal_1,\Ccal_2,
\ldots$ of row spaces thereof. Each 
$C^{(m)}$ or $\Ccal_m \subseteq \FF_q^{sm}$
describes the first $q^m$ points of the digital sequence and each has a dual space 
$\Ccal^\perp_m \subseteq \FF_q^{sm}$ (which is
the null space of $C^{(m)}$ in $\FF_q^{sm}$) 
associated with it (see \cite[Section~7.1]{DP2010}). Hence the 
dual for a digital sequence consists of
a sequence $(\Ccal^\perp_m)_{m \ge 1}$ of dual spaces which have
certain relations to each other, that we shall explain in the following, crucial definition.

\begin{definition}[dual space chain] \label{defdualseq}
\rm 
Let $s \in \NN$, $s \ge 2$. For all $m \in \NN$, let $\cN_m$ be a $\FF_q$-linear subspace of
$\FF_q^{sm}$ with $\mathrm{codim}(\cN_m) \le m$. Let 
$$\cN_{m+1,m}:=\left\{(\bfa_1,\ldots, \bfa_s)\in\FF_{q}^{s m}:
((\bfa_1, 0),\ldots, (\bfa_s, 0))\in\cN_{m+1}
\right\},$$
i.e., $\cN_{m+1,m}$ is the projection of vectors in $\cN_{m+1}$ with zeroes at every $(m+1)$-th coordinate.
 Suppose that $\cN_{m+1,m}$ is a $\FF_q$-linear subspace of $\cN_m$ with 
$\dim (\cN_{m}/\cN_{m+1,m}) \le 1$ for all $m \in \NN$.
Then the sequence $(\cN_m)_{m \ge 1}$ of spaces is called a 
\emph{dual space chain}.
\end{definition}

The following result has been first shown by Dick and Niederreiter~\cite{DN2009}. It can also be found as \cite[Corollary~7.27]{DP2010}.

\begin{theorem}\label{chp5th2}
For a given dual space chain $(\cN_m)_{m \ge 1}$, one can construct generating matrices
$C_1,\ldots,C_s$ of a strict digital $(\Tfett,s)$-sequence over $\FF_q$ such that $$\Tfett(m)=m-\delta_m(\cN_m)+1\;\mbox{ for all }\; m \in \NN.$$ For all $m \in \NN$ the $m$th row space $\Ccal_m$ of the $m$th overall generating matrix $C^{(m)}$  satisfies $\Ccal_m =\cN_m^\perp$. 
\end{theorem}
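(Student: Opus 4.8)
The plan is to reduce this statement to the theorem for nets (Theorem~1) applied simultaneously and compatibly at every level $m$, and then to track how the dual-space-chain condition forces the corresponding generating matrices $C_1^{(m)},\dots,C_s^{(m)}$ to be nested left-upper submatrices of fixed infinite matrices $C_1,\dots,C_s$. First I would recall that for each fixed $m$, the subspace $\cN_m \subseteq \FF_q^{sm}$ has $\dim(\cN_m) \ge sm - m$ by hypothesis, so Theorem~1 already hands us generating matrices $C_1^{(m)},\dots,C_s^{(m)}$ of a strict digital $(\Tfett(m),m,s)$-net over $\FF_q$ with $\Tfett(m) = m - \delta_m(\cN_m) + 1$, and with the row space of the overall generating matrix equal to $\cN_m^\perp$. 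The content of the theorem is that these level-$m$ matrices can be chosen \emph{coherently in $m$}, i.e.\ so that $C_i^{(m)}$ is the $m \times m$ left-upper block of $C_i^{(m+1)}$ for every $i$ and every $m$; then the infinite matrices $C_i := \lim_m C_i^{(m)}$ are well-defined and, by Definition~9, generate a digital $(\Tfett,s)$-sequence with the asserted quality function, and strictness at each level follows from strictness of each net.

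The key step is therefore to show that a dual space chain can be realized by a \emph{nested} family of overall generating matrices. I would argue level by level: given $C^{(m)}$ whose row space is $\cN_m^\perp$, I want to produce $C^{(m+1)}$ whose row space is $\cN_{m+1}^\perp$ and whose ``deletion of the $(m+1)$st coordinate in each block'' recovers $C^{(m)}$. Dualizing the chain condition is the crux: the relation $\cN_{m+1,m} \subseteq \cN_m$ with $\dim(\cN_m/\cN_{m+1,m}) \le 1$ should translate, under orthogonal complements, into a relation between $\cN_m^\perp$ and the projection/lift of $\cN_{m+1}^\perp$, namely that $\cN_{m+1}^\perp$ projects onto $\cN_m^\perp$ (after forgetting the $(m+1)$st coordinate of each of the $s$ blocks) and the kernel of this projection has dimension at most $1$. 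Concretely, $\cN_{m+1,m}$ is obtained from $\cN_{m+1}$ by intersecting with the coordinate subspace where every $(m+1)$st block-coordinate vanishes and then deleting those coordinates; the adjoint operation on the dual side is: take $\cN_{m+1}^\perp$, delete the same $s$ coordinates, obtaining a subspace of $\FF_q^{sm}$, and one checks this equals $\cN_m^\perp$ (the codimension bookkeeping works because $\mathrm{codim}(\cN_{m+1}) \le m+1$ and $\mathrm{codim}(\cN_m)\le m$, consistent with gaining at most one new basis row and $s$ new columns). I would verify this linear-algebra identity carefully, as it is where the precise form of Definition~8 is used.

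With the dual-side relation in hand, the construction of $C^{(m+1)}$ from $C^{(m)}$ is routine: extend a basis of $\cN_m^\perp$ (the rows of $C^{(m)}$, padded with one zero after each length-$m$ block to sit inside $\FF_q^{s(m+1)}$) by at most one further vector spanning the missing direction in $\cN_{m+1}^\perp$; this realizes $\cN_{m+1}^\perp$ as the row space of a matrix $C^{(m+1)}$ whose column-deletion yields exactly $C^{(m)}$, hence whose $i$th block's left-upper $m\times m$ submatrix is $C_i^{(m)}$. Starting from $m=1$ and iterating gives the nested family, hence the infinite matrices $C_1,\dots,C_s$. Finally I would invoke Definition~9 and Theorem~1 at each level to conclude that these generate a strict digital $(\Tfett,s)$-sequence with $\Tfett(m) = m - \delta_m(\cN_m) + 1$ and $\Ccal_m = \cN_m^\perp$ for all $m$. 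The main obstacle is the middle paragraph: pinning down exactly which coordinate-deletion/padding operation on dual spaces is adjoint to the operation defining $\cN_{m+1,m}$, and checking that the $\dim \le 1$ condition transfers correctly so that one extra row always suffices; the rest is either a direct appeal to Theorem~1 or standard bookkeeping.
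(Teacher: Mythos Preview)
The paper does not prove this theorem; it merely cites it from Dick and Niederreiter~[DN2009] (also [DP2010, Corollary~7.27]). So there is no ``paper's own proof'' to compare against, but your plan is indeed the Dick--Niederreiter strategy: invoke duality for nets at each level and show the resulting overall generating matrices can be chosen nested.

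There is, however, a genuine gap in your middle paragraph. You write that one should take the rows of $C^{(m)}$, ``padded with one zero after each length-$m$ block,'' and then add one more row to span $\cN_{m+1}^\perp$. But zero-padding is exactly the inclusion $\iota:\FF_q^{sm}\hookrightarrow\FF_q^{s(m+1)}$, and $\iota(\cN_m^\perp)$ is \emph{not} contained in $\cN_{m+1}^\perp$ in general: for $v\in\cN_m^\perp$ and $b\in\cN_{m+1}$ one has $\langle\iota(v),b\rangle=\langle v,\pi(b)\rangle$, and $\pi(b)$ need not lie in $\cN_m$. What the chain condition $\cN_{m+1,m}\subseteq\cN_m$ actually gives (via the adjunction you correctly identify) is $\cN_m^\perp\subseteq(\cN_{m+1,m})^\perp=\pi(\cN_{m+1}^\perp)$, an inclusion rather than an equality. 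So instead of padding, you must \emph{lift} each row of $C^{(m)}$ along the surjection $\pi|_{\cN_{m+1}^\perp}:\cN_{m+1}^\perp\to\pi(\cN_{m+1}^\perp)$, possibly with nonzero entries in the new $s$ coordinates. You then need a dimension count showing that the $m$ lifts together with one further vector can always be chosen to span $\cN_{m+1}^\perp$; this uses \emph{both} hypotheses in Definition~8 (the bound $\mathrm{codim}(\cN_{m+1})\le m+1$ and the bound $\dim(\cN_m/\cN_{m+1,m})\le 1$), and is slightly more delicate than you suggest when the rows of $C^{(m)}$ are linearly dependent, since one must also exploit those dependencies to absorb elements of $\ker\pi\cap\cN_{m+1}^\perp$ into the span. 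With this correction, your outline matches the cited proof.
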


\section{Hyperplane Nets}\label{secHNet}

Hyperplane nets are an example of digital nets whose construction is based on duality theory. They are a generalization of cyclic nets as introduced by Niederreiter~\cite{Ni2004} and were first presented in \cite{PDP}. The slightly more general definition given here was first presented in \cite{cyczoo}. An overview of hyperplane nets can be found in \cite[Chapter~11]{DP2010}.

\begin{definition}[hyperplane nets]\label{defhypnet}\rm
Let $m,s \in \NN$, $s\geq2$, and a prime-power $q$ be given. Let $f \in \FF_q[x]$ with $\deg(f)=m$ and let $R_m:=\FF_{q}[x]/(f)$ be the polynomial residue class ring modulo the ideal $(f)$. For $i=1,\ldots , s$ let $\mathfrak B_i=\{\mathfrak b_{i,1},\ldots,\mathfrak b_{i,m}\}$ be an ordered basis of the polynomial residue class ring $R_m$ considered as vector space over $\FF_q$.

Fix an element $\bsalpha =(\alpha_1,\ldots,\alpha_s) \in R_m^s \setminus \{\bszero\}$ and consider the subspace $$\mathcal{N}'_{\bsalpha}:=\{ \bfk \in R_m^s\, : \, \bsalpha \cdot \bfk \equiv 0 \pmod{f}\}.$$
Define the mapping $\theta: R_m^s \rightarrow \FF_q^{sm}$ by
\[
\bfk=(k_1,\ldots,k_s) \in R_m^s \mapsto
   (\kappa_{1,1},\ldots,\kappa_{1,m},\ldots,\kappa_{s,1},\ldots,\kappa_{s,m})
  \in\FF_q^{sm}, \]
where $(\kappa_{i,1},\ldots,\kappa_{i,m})$ is the coordinate
vector of $k_i \in R_m$ with respect to the basis $\mathfrak
B_i$.

We denote by $\mathcal C_{\bsalpha}$ the orthogonal subspace in
$\FF_q^{sm}$ of the image $\mathcal N_{\bsalpha}:=\theta(\mathcal
N'_{\bsalpha})$. Let \[C_{\bsalpha} \in
\FF_q^{m\times sm}\] be a matrix whose row space is $\mathcal
C_{\bsalpha}$. Then we call the digital net with overall generating matrix $C_{\bsalpha}$ a
\emph{hyperplane net over $\FF_q$ with respect to ${\mathfrak B_1,\ldots,\mathfrak B_s}$}. This hyperplane net will be denoted by $\cP_{\bsalpha}$ and we say $\cP_{\bsalpha}$ is the hyperplane net associated with $\bsalpha$. 
\end{definition}

Note that the dual space of a hyperplane net is 
\begin{eqnarray*}
\mathcal{N}_{\bsalpha} = \theta(\mathcal
N'_{\bsalpha})= \{\bfA \in \FF_{q}^{sm} \, : \, C_{\bsalpha} \bfA^{}=\bszero\}.
\end{eqnarray*}

In the following we show how to determine the quality-parameter $t$ of a hyperplane net. To this end we need a further definition:

\begin{definition}[figure of merit]\rm
For $\bsalpha \in R_m^s$ the {\it figure of merit} is defined as 
\begin{equation}\label{figmer}
\rho(\bsalpha)=s-1+\min_{\bfk \in \mathcal N'_{\bsalpha}\setminus \{\bszero\}} \sum_{i=1}^s \deg(k_i) = -1 +\delta_m(\cN_{\bsalpha}).
\end{equation}
\end{definition}

The following result gives a relation between the quality-parameter of a hyperplane net and its figure of merit. It can be found as \cite[Theorem~1]{PP2009}.

\begin{theorem}
A hyperplane net $\cP_{\bsalpha}$ associated to $\bsalpha \in R_m^s\setminus\{\bszero\}$ is a digital $(t,m,s)$-net over $\FF_q$ with $t=m-\rho(\bsalpha)$.
\end{theorem}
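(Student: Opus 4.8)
The plan is to reduce the statement entirely to the duality-theoretic machinery recalled in Section~\ref{sec:dlty}, specifically Theorem~\ref{chp5th2} in its net-version. Recall that the hyperplane net $\cP_{\bsalpha}$ is, by Definition~\ref{defhypnet}, the digital net whose overall generating matrix $C_{\bsalpha}$ has row space $\mathcal C_{\bsalpha} = \mathcal N_{\bsalpha}^{\bot}$, where $\mathcal N_{\bsalpha}=\theta(\mathcal N'_{\bsalpha})$. So the net-version of the duality theorem (the theorem attributed to Niederreiter and Pirsic, i.e.\ \cite[Corollary~7.12]{DP2010}) applies directly with $\cN=\mathcal N_{\bsalpha}$, provided the hypothesis $\dim(\mathcal N_{\bsalpha})\ge sm-m$ is met. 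That hypothesis will follow because $\mathcal N'_{\bsalpha}=\{\bfk\in R_m^s : \bsalpha\cdot\bfk\equiv 0\pmod f\}$ is the kernel of the $\FF_q$-linear map $R_m^s\to R_m$, $\bfk\mapsto\bsalpha\cdot\bfk \bmod f$, whose image has $\FF_q$-dimension at most $m=\deg f$; hence $\dim_{\FF_q}\mathcal N'_{\bsalpha}\ge sm-m$, and since $\theta$ is an $\FF_q$-linear isomorphism, $\dim(\mathcal N_{\bsalpha})=\dim(\mathcal N'_{\bsalpha})\ge sm-m$ as well.

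With the duality theorem in hand, it gives generating matrices of a (strict) digital $(t,m,s)$-net with $t=m-\delta_m(\mathcal N_{\bsalpha})+1$, and the row space of the overall generating matrix equals $\mathcal N_{\bsalpha}^{\bot}=\mathcal C_{\bsalpha}$. This matches the definition of $\cP_{\bsalpha}$ (up to the usual equivalence: any two overall generating matrices with the same row space give the same digital net up to a permutation of the points, hence the same $t$-value), so $\cP_{\bsalpha}$ is a digital $(m-\delta_m(\mathcal N_{\bsalpha})+1,m,s)$-net.

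The remaining step is purely bookkeeping: by the displayed identity \eqref{figmer} in the definition of the figure of merit, $\rho(\bsalpha)=\delta_m(\mathcal N_{\bsalpha})-1$, so $\delta_m(\mathcal N_{\bsalpha})=\rho(\bsalpha)+1$ and therefore
\[
t=m-\delta_m(\mathcal N_{\bsalpha})+1 = m-(\rho(\bsalpha)+1)+1 = m-\rho(\bsalpha),
\]
which is exactly the asserted value. One should, however, double-check the one nontrivial arithmetic equality buried in \eqref{figmer}, namely that
\[
\delta_m(\mathcal N_{\bsalpha}) = s-1+\min_{\bfk\in\mathcal N'_{\bsalpha}\setminus\{\bszero\}}\sum_{i=1}^s\deg(k_i)+1;
\]
this comes from relating, coordinatewise, the NRT weight $v_m$ of the coordinate vector of $k_i\in R_m$ (with respect to the basis $\mathfrak B_i$) to $\deg(k_i)$, under the standing convention that makes $v_m$ of the zero vector equal to $0$ while $\deg$ of the zero polynomial is taken as $-1$ so that the per-component contribution is $\deg(k_i)+1$; summing over $i$ and using $V_m(\bfA)=\sum_i v_m(\bfa_i)$ yields the claim, and taking the minimum over $\mathcal N_{\bsalpha}\setminus\{\bszero\}$ gives $\delta_m$.

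The main obstacle is not any single hard estimate but rather the careful handling of conventions and the equivalence of digital nets: one must be sure that the bases $\mathfrak B_i$ do not affect the $t$-value (they do not, since they only reshuffle which $\FF_q^{sm}$-vectors sit in $\mathcal N_{\bsalpha}$ while preserving the relevant weight structure once one accounts for the $\deg$ versus $v_m$ correspondence), and that passing between ``a matrix whose row space is $\mathcal C_{\bsalpha}$'' and ``the generating matrices produced by the duality theorem'' does not change the net's quality parameter. Both points are standard — the first is exactly why \eqref{figmer} is phrased in terms of $\deg(k_i)$ and is basis-independent, and the second is the routine fact that digital nets are determined up to a point-permutation by the row space of their overall generating matrix — so the proof is short once these are acknowledged.
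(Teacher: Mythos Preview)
The paper does not actually prove this theorem; it merely cites \cite[Theorem~1]{PP2009}. Your argument is the natural one suggested by the surrounding text: apply the Niederreiter--Pirsic duality theorem to $\cN_{\bsalpha}$ (whose codimension bound you verify correctly via the kernel argument), then invoke the second equality in \eqref{figmer} to convert $\delta_m(\cN_{\bsalpha})$ into $\rho(\bsalpha)+1$. Modulo that equality, your deduction $t=m-\delta_m(\cN_{\bsalpha})+1=m-\rho(\bsalpha)$ is clean and matches exactly what one would expect the cited proof to do.

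There is, however, a genuine gap in your side-verification of \eqref{figmer}. You write that ``per-component'' one has $v_m(\theta_i(k_i))=\deg(k_i)+1$, and this is what links the NRT weight to the degree sum. That identity holds when $\mathfrak B_i$ is the canonical basis $\{1,x,\dots,x^{m-1}\}$ (or more generally any ordered basis with $\deg\mathfrak b_{i,j}=j-1$), but it fails for an arbitrary ordered basis as permitted in Definition~\ref{defhypnet}. For instance, with $m=2$, $q=2$, $f=x^2+x+1$, $\bsalpha=(1,x)$, taking $\mathfrak B_1=\{x,1\}$ and $\mathfrak B_2=\{1,x\}$ one computes $\delta_m(\cN_{\bsalpha})=2$, whereas $\rho(\bsalpha)+1=3$; so the coordinatewise identity, and with it the second equality in \eqref{figmer}, breaks down. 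Your closing remark that the bases ``preserve the relevant weight structure'' is therefore not justified as stated. The upshot is that your argument is complete and correct only under a degree-compatibility assumption on the $\mathfrak B_i$ (which is indeed the standing assumption in \cite{PP2009} and in the sequence setting of Section~\ref{defhypseq}); for fully general bases you have not supplied a proof of \eqref{figmer}, and in fact the equality can fail.
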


Observe that there is no dependence on the chosen bases. This means that a different choice of bases can be considered as a scrambling of the original sequence that does not deteriorate the
quality.

The following existence result is a slight generalization of \cite[Theorem~2]{PP2009}.

\begin{theorem}\label{llnsth1hypnet}
Let $m,s \in \NN$, $s \ge 2 $ and let $q$ be a prime-power. Choose ordered bases $\mathcal{B}_1,\ldots,\mathcal{B}_s$ of $R_m$ over $\FF_q$. For $\rho \in \ZZ$ define $$\Delta_q (s,\rho) =\sum_{d=0}^{s-1}{s \choose d}(q-1)^{s-d}\sum_{\gamma=0}^{\rho+d}{s-d+\gamma-1 \choose \gamma} q^{\gamma}+1-q^{\rho +s}.
$$
Let $\beta \in \RR$ such that $0 < \beta \le 1$. If $\Delta_q (s,\rho) < \beta q^m \left(\frac{q-1}{q}\right)^{s-1}$, then there exist more than $(1-\beta) ((q-1)q^{m-1})^{s-1}$ elements $\bsalpha = (1,\alpha_2,\ldots,\alpha_s) \in R_m^s$ with $\gcd(\alpha_i,x)=1$ for all $i=2, \ldots, s$, such that $\rho(\bsalpha) \ge s + \rho$. The associated hyperplane nets $\cP_{\bsalpha}$ are digital $(t,m,s)$-nets over $\FF_q$ with $t \leq m-s-\rho.$
\end{theorem}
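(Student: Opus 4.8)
The plan is to prove the statement by a counting argument of the type used for good lattice points (it closely parallels the proof of \cite[Theorem~2]{PP2009}): bound the number of \emph{bad} parameters and show it is a small fraction of the admissible ones. Write $S:=\{\bsalpha=(1,\alpha_2,\dots,\alpha_s)\in R_m^s:\gcd(\alpha_i,x)=1\text{ for }i=2,\dots,s\}$, so $|S|=((q-1)q^{m-1})^{s-1}$, and call $\bsalpha\in S$ \emph{bad} if $\rho(\bsalpha)<s+\rho$. By \eqref{figmer}, with the convention $\deg(0):=-1$, this is equivalent to the existence of a nonzero $\bfk=(k_1,\dots,k_s)\in R_m^s$ with $\bsalpha\cdot\bfk\equiv0\pmod f$ and $\sum_{i=1}^s\deg(k_i)\le\rho$. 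Hence it suffices to prove that the number $B$ of bad $\bsalpha$ satisfies $B<\beta|S|$, since then the good elements of $S$ number $|S|-B>(1-\beta)|S|$; the final assertion about the $(t,m,s)$-net property is then immediate, because by the theorem identifying the quality parameter of $\cP_{\bsalpha}$ with $m-\rho(\bsalpha)$ each good $\cP_{\bsalpha}$ is a digital $(m-\rho(\bsalpha),m,s)$-net over $\FF_q$ and $m-\rho(\bsalpha)\le m-s-\rho$.

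To bound $B$ I would first record that, because $\alpha_1=1$, any witness $\bfk$ as above has $(k_2,\dots,k_s)\neq\bszero$ (otherwise $k_1\equiv0\pmod f$ forces $k_1=0$, as $\deg(k_1)<m$), and then $k_1$ is the unique residue with $k_1\equiv-(\alpha_2k_2+\dots+\alpha_sk_s)\pmod f$. A union bound over witnesses gives $B\le\sum_{\bfk}N(\bfk)$, where $\bfk$ runs through the nonzero vectors of $R_m^s$ with $\sum_i\deg(k_i)\le\rho$ and $(k_2,\dots,k_s)\neq\bszero$, and $N(\bfk):=\#\{\bsalpha\in S:\bsalpha\cdot\bfk\equiv0\pmod f\}$. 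The number of such $\bfk$ is exactly $\Delta_q(s,\rho)$ when $\rho+s\le m$ (and at most $\Delta_q(s,\rho)$ in general): classifying $\bfk$ by the number $d$ of its vanishing coordinates and by the sum $\gamma=\sum_{k_i\neq0}\deg(k_i)$ of the degrees of the remaining ones, the count with given $(d,\gamma)$ equals $\binom{s}{d}\binom{s-d+\gamma-1}{\gamma}(q-1)^{s-d}q^{\gamma}$ — $\binom{s}{d}$ for the vanishing positions, $\binom{s-d+\gamma-1}{\gamma}$ for the distribution of the degrees over the $s-d$ nonzero coordinates, $(q-1)^{s-d}q^{\gamma}$ for the coefficients — and summing over $0\le d\le s-1$ and $0\le\gamma\le\rho+d$ and subtracting $q^{\rho+s}-1$, the number of nonzero short vectors supported on the first coordinate alone (which admit no $\bsalpha$ whatsoever), reproduces precisely the expression defining $\Delta_q(s,\rho)$.

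The heart of the matter, and the step I expect to be the main obstacle, is estimating $N(\bfk)$ and re-summing to obtain $B\le\Delta_q(s,\rho)\,((q-1)q^{m-1})^{s-2}$; granted this, the hypothesis $\Delta_q(s,\rho)<\beta q^m(\tfrac{q-1}{q})^{s-1}$ yields $B<\beta q^m(\tfrac{q-1}{q})^{s-1}((q-1)q^{m-1})^{s-2}=\beta|S|(\tfrac{q-1}{q})^{s-2}\le\beta|S|$, and the proof closes. For $N(\bfk)$ one fixes $\bfk$ and studies the $\FF_q$-linear map $\Psi\colon R_m^{s-1}\to R_m$, $(\alpha_2,\dots,\alpha_s)\mapsto\alpha_2k_2+\dots+\alpha_sk_s$: its image is the principal ideal generated by $g:=\gcd(k_2,\dots,k_s,f)$, so each nonempty fibre has $q^{m(s-2)+\deg(g)}$ elements, and $N(\bfk)$ is the number of points of one fibre lying in $S$ (and $N(\bfk)=0$ unless $g\mid k_1$). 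Reduction modulo $x$ carries such a fibre onto a coset of a subspace of $\FF_q^{s-1}$ of codimension $0$ or $1$, and a coset of a hyperplane of $\FF_q^{s-1}$ (or all of $\FF_q^{s-1}$) meets $(\FF_q\setminus\{0\})^{s-1}$ in at most $(q-1)^{s-2}$ points (the extremal cosets being the coordinate ones), whence $N(\bfk)\le((q-1)q^{m-1})^{s-2}q^{\deg(g)}$. If $f$ is irreducible — or, more generally, has no irreducible factor of degree $\le\rho+s-1$, which one may arrange here — then $g=1$ for every short $\bfk$ and this is already the required estimate (indeed some $k_j$ with $j\ge2$ is then a unit in $R_m$, so $\alpha_j$ is determined by the other coordinates). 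For general $f$ one additionally has to bound $\sum_{\bfk}q^{\deg(g)}$, the sum over short witnesses with $g\mid k_1$, against $\Delta_q(s,\rho)$; here the point is that dividing $g$ out of all coordinates turns $\bfk$ into a shorter witness — the degree budget dropping by about $s\deg(g)$ — so that the witnesses with $\deg(g)\ge1$ are geometrically rare and the weights $q^{\deg(g)}$ are compensated when one sums over the divisors of $f$, the re-summed bound differing from $\Delta_q(s,\rho)$ only by $(q-1)/q$-factors already absorbed into the hypothesis. Carrying out this divisor estimate and keeping exact track of those factors (notably in the case $x\mid f$ and for small $s$) is where the bulk of the work lies; once it is done, the displayed chain of inequalities gives $B<\beta|S|$ and hence the theorem.
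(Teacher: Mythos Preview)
The paper does not prove this theorem; it simply records it as ``a slight generalization of \cite[Theorem~2]{PP2009}'' and passes directly to Corollary~\ref{llnsc1}. Your counting strategy --- bound the number of bad $\bsalpha$ by a union bound over short witnesses $\bfk$, identify the total number of such witnesses as $\Delta_q(s,\rho)$, and bound $N(\bfk)$ --- is exactly the method of that reference, so at the level of approach there is nothing to compare.

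Two remarks on the execution. First, the parenthetical claim that all of $\FF_q^{s-1}$ meets $(\FF_q\setminus\{0\})^{s-1}$ in at most $(q-1)^{s-2}$ points is false (the correct count is $(q-1)^{s-1}$). Fortunately the codimension-$0$ case never actually arises: if the fibre $\Psi^{-1}(-k_1)$ is nonempty, divide the relation through by $g$ and observe that the reduced equation $\sum_{j\ge2}\alpha_j(0)\,(k_j/g)(0)=-(k_1/g)(0)$ has at least one nonzero coefficient on the left by the very definition of $g$. Hence the image of the fibre modulo $x$ is always a genuine affine hyperplane, and your bound $N(\bfk)\le((q-1)q^{m-1})^{s-2}q^{\deg g}$ survives, though for the wrong stated reason.

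Second, the step you yourself flag as the crux --- replacing $\sum_{\bfk}q^{\deg g(\bfk)}$ by $\Delta_q(s,\rho)$ --- is not carried out. The aside that one ``may arrange'' $f$ to have no small irreducible factors is not available: $f$ is fixed by the hypothesis (and in the application $f=x^m$, which has the degree-$1$ factor $x$). Your substitution $\bfk=g\bfk'$ does shrink the degree budget, but it does \emph{not} produce an element of $\cN'_{\bsalpha}$ (only of the dual modulo $f/g$), so ``shorter witness'' is not quite the right picture; and the bookkeeping of the drop $\sum\deg k_i-\sum\deg k_i'=(s-d)\deg g$ depends on the number $d$ of vanishing coordinates, which you have not tracked. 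This summation over the divisors of $f$, with the constraint $g\mid k_1$ and the varying $d$, is the actual content of passing from the case $g=1$ to general $f$; it needs to be written out rather than attributed to ``$(q-1)/q$-factors already absorbed into the hypothesis''.
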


\begin{corollary}\label{llnsc1}
Let $m,s \in \NN$, $s \ge 2 $ and $m$ sufficiently large, and let $q$ be a prime-power. Let $\beta \in \RR$ such that $0 < \beta \le 1$. There exist more than $(1-\beta) ((q-1)q^{m-1})^{s-1}$ elements $\bsalpha = (1,\alpha_2,\ldots,\alpha_s) \in R_m^s$ with $\gcd(\alpha_i,x)=1$ for all $i=2,\ldots ,s$, such that $$\rho(\bsalpha) \ge \left\lfloor m +\log_q \beta- (s- 1) (\log_q m -1)+ \log _q \frac{(s-1)!}{q^{s-1}}\right\rfloor.$$
\end{corollary}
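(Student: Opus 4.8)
The plan is to derive Corollary~\ref{llnsc1} from Theorem~\ref{llnsth1hypnet} by choosing $\rho$ as large as possible subject to the hypothesis $\Delta_q(s,\rho) < \beta q^m \left(\frac{q-1}{q}\right)^{s-1}$, and then reading off the resulting lower bound $\rho(\bsalpha) \ge s+\rho$. So the first task is to understand the asymptotic (in $\rho$) size of $\Delta_q(s,\rho)$. The dominant contribution comes from $d=0$ in the outer sum, and within that term, from the largest value $\gamma = \rho$ of the inner index; using $\binom{s+\gamma-1}{\gamma} q^\gamma$ and the fact that $\binom{s+\gamma-1}{\gamma} \sim \gamma^{s-1}/(s-1)!$ for fixed $s$, the inner sum over $\gamma$ from $0$ to $\rho$ is governed by its last term and is asymptotically $\frac{\rho^{s-1}}{(s-1)!} q^\rho (1+o(1))$ (the geometric factor $q^\gamma$ makes the sum comparable to its final term up to a constant, and the polynomial factor contributes $\rho^{s-1}$). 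Multiplying by $\binom{s}{0}(q-1)^s = (q-1)^s$, the leading term of $\Delta_q(s,\rho)$ is $(q-1)^s \frac{\rho^{s-1}}{(s-1)!} q^\rho$ up to lower-order terms, and one must check that the $d \ge 1$ terms and the $-q^{\rho+s}$ and $+1$ terms do not interfere with this leading behaviour (they are smaller by a factor polynomial in $\rho$, or of the same exponential order but absorbed into the $O(1)$ at the end).

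Next I would solve the inequality $\Delta_q(s,\rho) < \beta q^m \left(\frac{q-1}{q}\right)^{s-1}$ for $\rho$ in terms of $m$. Taking $\log_q$ of both sides and using the leading-term estimate, the condition becomes approximately
\[
\rho + (s-1)\log_q \rho + s\log_q(q-1) - \log_q (s-1)! < m + \log_q \beta + (s-1)\log_q(q-1) - (s-1),
\]
i.e. $\rho + (s-1)\log_q\rho < m + \log_q\beta - (s-1) + \log_q(q-1) - \log_q(s-1)!$ plus $O(1)$ corrections. Since $\rho \le m$, we have $\log_q \rho = \log_q m + O(1)$, so $(s-1)\log_q\rho = (s-1)\log_q m + O(1)$, and the largest admissible integer $\rho$ satisfies
\[
\rho = m + \log_q\beta - (s-1)\log_q m - (s-1) + \log_q\frac{q-1}{(s-1)!} + O(1).
\]
Then $\rho(\bsalpha) \ge s + \rho = m + \log_q\beta - (s-1)(\log_q m - 1) + \log_q\frac{(s-1)!}{q^{s-1}} + O(1)$ after rearranging $s - (s-1) - \log_q(s-1)! + \log_q(q-1) = 1 + \log_q\frac{q-1}{(s-1)!}$ — here I should double-check the exact bookkeeping of constants against the stated floor expression, since the corollary as written replaces the $O(1)$ by a specific floor, so I would need to verify that the admissible $\rho$ is at least $\lfloor m + \log_q\beta - (s-1)(\log_q m-1) + \log_q\frac{(s-1)!}{q^{s-1}}\rfloor - s$ for $m$ sufficiently large, which follows because the error terms I dropped are $o(1)$ relative to the gap created by taking the floor, once $m$ exceeds some threshold depending on $q$ and $s$.

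The main obstacle is the careful asymptotic analysis of $\Delta_q(s,\rho)$: I must bound the inner sum $\sum_{\gamma=0}^{\rho+d}\binom{s-d+\gamma-1}{\gamma}q^\gamma$ both above and below with matching leading terms, uniformly enough in $d$, and confirm that summing over $d = 0,\dots,s-1$ against the weights $\binom{s}{d}(q-1)^{s-d}$ still leaves the $d=0$ term dominant (it is, since the $q^\gamma$ factor with $\gamma$ up to $\rho+d$ would suggest $d=s-1$ could compete via the larger upper limit $\rho + s - 1$, contributing $\sim (q-1)\cdot s \cdot \frac{(\rho)^0}{0!}q^{\rho+s-1} = O(q^{\rho+s-1})$, which is actually of the \emph{same} exponential order $q^\rho$ up to the constant $q^{s-1}$ — so one has to be slightly careful: both the $d=0$ term, which is $\Theta(\rho^{s-1}q^\rho)$, and the cancellation against $-q^{\rho+s}$ matter). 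In fact the polynomial factor $\rho^{s-1}$ in the $d=0$ term dominates the constant-factor terms from $d \ge 1$ and from $-q^{\rho+s}$ for large $\rho$, so $\Delta_q(s,\rho) = (q-1)^s\frac{\rho^{s-1}}{(s-1)!}q^\rho(1+o(1))$ stands; nailing down this $1+o(1)$ with explicit enough control to justify the floor in the final bound is the delicate part. Once that is in hand, the passage to the displayed inequality for $\rho$ and the substitution into $\rho(\bsalpha) \ge s+\rho$ is routine algebra with logarithms, and the conclusion about the number of admissible $\bsalpha$ and the net quality transfers verbatim from Theorem~\ref{llnsth1hypnet}.
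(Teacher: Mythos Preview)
Your approach is essentially the paper's: bound $\Delta_q(s,\rho)$ above by its leading asymptotic term, choose the largest integer $\rho$ for which the hypothesis of Theorem~\ref{llnsth1hypnet} holds, and read off $\rho(\bsalpha)\ge s+\rho$.

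Two points of execution where the paper is cleaner than your sketch. First, you never need a lower bound on $\Delta_q(s,\rho)$, only an upper bound; and since $1-q^{\rho+s}<0$ you may drop that tail immediately rather than worry about ``cancellation.'' Second, your leading constant is slightly off: the inner sum $\sum_{\gamma=0}^{\rho}\binom{s+\gamma-1}{\gamma}q^\gamma$ is not asymptotic to its last term but to $\frac{q}{q-1}$ times it (geometric tail), so the leading term of $\Delta_q(s,\rho)$ is $(q-1)^{s-1}\dfrac{\rho^{s-1}}{(s-1)!}\,q^{\rho+1}\bigl(1+O_s(1/\rho)\bigr)$, not $(q-1)^s\dfrac{\rho^{s-1}}{(s-1)!}\,q^{\rho}$. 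This accounts for the sign discrepancies you noticed in your bookkeeping; with the correct constant one takes
\[
\rho=\left\lfloor m+\log_q\beta-(s-1)\log_q m+\log_q\frac{(s-1)!}{q^{s-1}}-1\right\rfloor
\]
and then $s+\rho$ equals exactly the floor displayed in the corollary, with no residual $O(1)$.
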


\begin{proof}
For $\rho \ge 1 $ we have
\begin{eqnarray*}
\Delta_q(s,\rho) & \le & \sum_{d=0}^{s-1}
  {s \choose d} (q-1)^{s-d} {\rho + s -1\choose s - d-1}
   \frac{q^{\rho + d+1}}{q-1}\\
 & \le & q^{\rho+1} \sum_{d=0}^{s-1}{s \choose d} (q-1)^{s-d-1} q^d \frac{(\rho +s-1)^{s-d-1}}{(s-d-1)!}\\
& = & \frac{\rho^{s-1}}{(s-1)!} q^{\rho+1} (q-1)^{s-1} \left(1 + O_{s}\left(\frac{1}{\rho}\right)\right),
\end{eqnarray*}
where $O_{s}$ indicates that the implied constant depends only on $s$.
Now let
$$ \rho = \left\lfloor m+ \log_q \beta - (s- 1) \log_q m + \log _q \frac{(s-1)!}{q^{s-1}} -1\right\rfloor ,$$
which is in $\NN$ for sufficiently large $m$. Then
\begin{eqnarray*}
\Delta_q(s,\rho)
& \le & \beta q^m  \left(\frac{q-1}{q}\right)^{s-1} \left(1+\frac{\log_q \beta}{m} - (s-1) \frac{\log_q m}{m} + \frac{1}{m}
\log_q \frac{ (s-1)!}{q^{s-1}}\right)^{s-1}\\
&& \times \left(1 + O_s\left(\frac{1}{m}\right)\right)\\
& < & \beta q^m \left(\frac{q-1}{q}\right)^{s-1}
\end{eqnarray*}
for sufficiently large $m$. The result follows from Theorem~\ref{llnsth1hypnet}.
\end{proof}

\section{Definition of Hyperplane Sequences}\label{defhypseq}

For $m \in \NN$ set $f_m(x)=x^m \in \FF_q[x]$. For each $i=1,\dots,s$ and all $m\in \NN$ choose a basis $\mathfrak B_{m,i}=\{\mathfrak b_{m,i,1},\dots,\mathfrak b_{m,i,m}\}$ 
of the polynomial residue class ring $R_m:=\FF_q[x]/{(x^m)}$ over $\FF_q$
such that, expressed in the canonical basis, $\{1,x,x^2,\dots\}$, we have
 $\mathfrak b_{m+1,i,j}=(\mathfrak b_{m,i,j},0)$ for all $1 \le j \le m$.
Define the mapping $\theta_m: R_m^s \rightarrow \FF_q^{sm}$ by
\[
\bfk=(k_1,\ldots,k_s) \in R_m^s \mapsto
   (\kappa_{1,1},\ldots,\kappa_{1,m},\ldots,\kappa_{s,1},\ldots,\kappa_{s,m})
  \in\FF_q^{sm}, \]
where $(\kappa_{i,1},\ldots,\kappa_{i,m})$ is the coordinate
vector of $k_i \in R_m$ with respect to the chosen basis $\mathfrak
B_{m,i}$ for $i=1,\ldots, s$.

Let $\FF_q[[x]]$ be the space of formal power series over $\FF_q$. Let $Y$ denote the subset of power series whose reductions modulo any
 $x^m$ are invertible modulo $x^m$, i.e., 
 $$Y=\{\alpha \in \FF_q[[x]]\,:\,  \gcd(\alpha \bmod x^m,x^m)=1 \,\forall m\in \NN\}.$$ 

Let $\bsalpha \in Y^s$ and, for $m \in \NN$, set $\bsalpha^{(m)} \equiv \bsalpha \pmod{x^m}$ (coordinate-wise).

\begin{theorem}\label{hyperplanspchain}
For $\bsalpha \in Y^s$ and $m \in \NN$ let $\mathcal N'_{\bsalpha^{(m)}}=\{\bfk \in R_m^s\, : \, \bsalpha^{(m)} \cdot \bfk \equiv 0 \pmod{x^m}\}$. Then the sequence $(\mathcal N_{\bsalpha^{(m)}})_{m \ge 1}$, where $\mathcal N_{\bsalpha^{(m)}} = \theta_m(\mathcal N'_{\bsalpha^{(m)}})$, is a dual space chain.  
\end{theorem}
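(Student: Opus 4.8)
The plan is to transport the entire configuration back into the residue rings $R_m$ via the coordinate isomorphisms $\theta_m$, where the three requirements of Definition~\ref{defdualseq} become elementary statements about reduction modulo powers of $x$. Write $\cN_m:=\mathcal N_{\bsalpha^{(m)}}$; I must then check that (i) $\cN_m$ is an $\FF_q$-subspace of $\FF_q^{sm}$ of codimension at most $m$, (ii) $\cN_{m+1,m}\subseteq\cN_m$, and (iii) $\dim(\cN_m/\cN_{m+1,m})\le 1$.

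First I would introduce two auxiliary maps and one identity linking them. Since $\theta_m$ records coordinates with respect to a basis of $R_m^s$, it is an $\FF_q$-linear isomorphism $R_m^s\to\FF_q^{sm}$. Let $\iota_m\colon\FF_q^{sm}\to\FF_q^{s(m+1)}$ insert a zero as the $(m+1)$-st entry of each of the $s$ length-$m$ blocks, so that by definition $\cN_{m+1,m}=\iota_m^{-1}(\cN_{m+1})$, and let $\lambda_m\colon R_m\to R_{m+1}$ be the $\FF_q$-linear (not multiplicative) map sending an element to its degree-$<m$ representative, extended coordinate-wise to $\lambda_m^{(s)}\colon R_m^s\to R_{m+1}^s$. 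The basis-compatibility hypothesis $\mathfrak b_{m+1,i,j}=(\mathfrak b_{m,i,j},0)$ says precisely that $\mathfrak b_{m+1,i,j}=\lambda_m(\mathfrak b_{m,i,j})$ for $1\le j\le m$, and reading off coordinates it yields the key identity
\[ \theta_{m+1}\circ\lambda_m^{(s)} \;=\; \iota_m\circ\theta_m. \]

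For (i), since $\bsalpha\in Y^s$ each $\alpha_i^{(m)}$ is a unit of $R_m$, so $\bfk\mapsto\bsalpha^{(m)}\cdot\bfk$ is a surjective $\FF_q$-linear map $R_m^s\to R_m$ with kernel $\mathcal N'_{\bsalpha^{(m)}}$; hence $\mathrm{codim}\,\mathcal N'_{\bsalpha^{(m)}}=m$ in $R_m^s$, and applying the isomorphism $\theta_m$ gives $\mathrm{codim}\,\cN_m=m$ in $\FF_q^{sm}$. For (ii) and (iii), the identity above together with injectivity of $\theta_{m+1}$ unwinds $\bfk\in\theta_m^{-1}(\cN_{m+1,m})\iff\iota_m(\theta_m(\bfk))\in\cN_{m+1}\iff\lambda_m^{(s)}(\bfk)\in\mathcal N'_{\bsalpha^{(m+1)}}$, so that
\[ \theta_m^{-1}(\cN_{m+1,m}) \;=\; \{\bfk\in R_m^s \,:\, \bsalpha^{(m+1)}\cdot\lambda_m^{(s)}(\bfk)\equiv 0 \pmod{x^{m+1}}\}. \]
Call the right-hand side $\cM_m$. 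Reducing the defining congruence modulo $x^m$ and using $\alpha_i^{(m+1)}\bmod x^m=\alpha_i^{(m)}$ and $\lambda_m(k_i)\bmod x^m=k_i$ shows $\cM_m\subseteq\mathcal N'_{\bsalpha^{(m)}}$, i.e.\ $\cN_{m+1,m}\subseteq\cN_m$. Finally, for $\bfk\in\mathcal N'_{\bsalpha^{(m)}}$ the polynomial $\bsalpha^{(m+1)}\cdot\lambda_m^{(s)}(\bfk)$, reduced modulo $x^{m+1}$, has all coefficients of degree below $m$ equal to zero, hence equals $c(\bfk)\,x^m$ for a single scalar $c(\bfk)\in\FF_q$; the map $\bfk\mapsto c(\bfk)$ is $\FF_q$-linear on $\mathcal N'_{\bsalpha^{(m)}}$ and $\cM_m$ is exactly its kernel, so $\dim(\mathcal N'_{\bsalpha^{(m)}}/\cM_m)\le 1$, and transporting by $\theta_m$ gives $\dim(\cN_m/\cN_{m+1,m})\le 1$. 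This establishes all three conditions, so $(\cN_m)_{m\ge 1}$ is a dual space chain.

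I expect the only genuine obstacle to be organisational bookkeeping: expressing the "insert a zero" operation and the basis compatibility correctly through the non-multiplicative inclusion $\lambda_m$, and observing that the extra constraint carving $\cN_{m+1,m}$ out of $\cN_m$ is controlled by the single coefficient of $x^m$ and therefore lowers the dimension by at most one. Once the identity $\theta_{m+1}\circ\lambda_m^{(s)}=\iota_m\circ\theta_m$ is in place, everything else is routine reduction modulo powers of $x$.
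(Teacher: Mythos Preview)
Your argument is correct and follows essentially the same route as the paper: both identify $\theta_m^{-1}(\cN_{m+1,m})$ with $\{\bfk\in R_m^s:\bsalpha^{(m+1)}\cdot\bfk\equiv 0\pmod{x^{m+1}}\}$ via the basis compatibility, and then bound the codimension in $\cN'_{\bsalpha^{(m)}}$ by one. The only cosmetic difference is that the paper writes the $m+1$ coefficient conditions as an explicit $(m+1)\times sm$ matrix and invokes a rank bound, whereas you isolate the single extra constraint as the linear functional $c(\bfk)$ picking out the $x^m$-coefficient; these are the same computation in different dress.
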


\begin{proof}
Let $\mathcal N_{m+1,m}$ be 
the set
$$\cN_{m+1,m}:=\left\{(\bfa_1,\ldots, \bfa_s)\in\FF_{q}^{m s}:
((\bfa_1, 0),\ldots, (\bfa_s, 0))\in\cN_{\bsalpha^{(m+1)}}
\right\}.$$
 By the choice of the sequence of bases $(\mathfrak B_{m,i})_{m \ge 1}$ for $i=1,\ldots, s$, it follows that $\mathcal N_{m+1,m}$ is a $\FF_q$-linear subspace of $\mathcal N_{\bsalpha^{(m)}}$. It remains to show that $$
\dim(\mathcal N_{\bsalpha^{(m)}}/ \mathcal N_{m+1,m}) \le 1 \Leftrightarrow
 \dim(\mathcal N_{m+1,m}) \ge \dim(\mathcal N_{\bsalpha^{(m)}})-1.$$ 
We have $$\mathcal N_{m+1,m} = \theta_m(\{\bfk \in R_m^s \, : \, \bsalpha^{(m+1)}\cdot \bfk \equiv 0 \pmod{x^{m+1}}\}).$$ Now we have $\bfA \in \mathcal N_{m+1,m}$ if and only if $\bfA=\theta_m(\bfk)$ where $\bsalpha^{(m+1)}\cdot \bfk \equiv 0 \pmod{x^{m+1}}$.
Let $\bsalpha^{(m+1)} = (\alpha_1,\ldots,\alpha_s)\in R_{m+1}^s$ where $\alpha_i=\alpha_{i,0}+\alpha_{i,1} x+\cdots +\alpha_{i,m} x^m$ for $i=1,\ldots, s$ and let $\bfk=(k_1\ldots,k_s) \in R_{m}^s$ with $k_i=\kappa_{i,0}+\kappa_{i,1} x +\cdots + \kappa_{i,m-1} x^{m-1}$ for $i=1,\ldots, s$. Then $\bsalpha^{(m+1)}\cdot \bfk \equiv 0 \pmod{x^{m+1}}$ is equivalent to
\begin{eqnarray}\label{cond1}
\sum_{j = 0}^m \left(\sum_{i=1}^s\sum_{l,u \ge 0 \atop l+u=j}\kappa_{i,l} \alpha_{i,u}\right)x^j =0.
\end{eqnarray}
Hence $\mathcal N_{m+1,m}$ is isomorphic to the nullspace of the $(m+1) \times sm$ matrix $A$ over $\FF_q$ given by $$\sum_{i=1}^s\left(\begin{array}{lllll}
         \alpha_{i,0} & 0         & \ldots & \ldots & 0\\
         \alpha_{i,1} & \alpha_{i,0}  & 0      & \ldots & 0\\
         \multicolumn{5}{c}\dotfill\\
         \alpha_{i,m-1} & \alpha_{i,m-2} & \alpha_{i,m-3} & \ldots & \alpha_{i,0}\\ 
         \alpha_{i,m} & \alpha_{i,m-1} & \alpha_{i,m-2} & \ldots & \alpha_{i,1}
        \end{array}\right).$$ Therefore we obtain $$\dim(\mathcal N_{m+1,m}) \ge sm-m-1 = \dim(\mathcal N_{\bsalpha^{(m)}})-1$$ and the result follows. 
\end{proof}

Combining Theorem~\ref{chp5th2}, Theorem~\ref{hyperplanspchain} and \eqref{figmer} we obtain the following corollary.

\begin{corollary}\label{co1}
From any $\bsalpha \in Y^s$ one can construct a strict digital $(\Tfett,s)$-sequence $\mathcal S_{\bsalpha}$ over $\FF_q$ where $$\Tfett(m)=m-\rho(\bsalpha^{(m)}) \ \mbox{ for all }\ m \in \NN.$$ 
\end{corollary}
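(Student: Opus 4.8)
The plan is to obtain Corollary~\ref{co1} by chaining together Theorem~\ref{hyperplanspchain}, Theorem~\ref{chp5th2}, and the identity~\eqref{figmer}; no new ideas are needed beyond a careful check of the edge cases. First I would record the consequences of the hypothesis $\bsalpha \in Y^s$: for every $m \in \NN$ each component of $\bsalpha^{(m)}$ is a unit in $R_m = \FF_q[x]/(x^m)$, so the $\FF_q$-linear map $R_m^s \to R_m$, $\bfk \mapsto \bsalpha^{(m)}\cdot\bfk \bmod x^m$, is surjective; hence $\mathcal N'_{\bsalpha^{(m)}}$, and with it $\mathcal N_{\bsalpha^{(m)}} = \theta_m(\mathcal N'_{\bsalpha^{(m)}})$, has $\FF_q$-dimension exactly $(s-1)m$. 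Since $s \ge 2$ this space is non-trivial, so $\delta_m(\mathcal N_{\bsalpha^{(m)}})$ is the genuine minimum of Definition~\ref{defNRTmindist} (not the conventional value $sm+1$), and $\mathrm{codim}(\mathcal N_{\bsalpha^{(m)}}) = m$, which is consistent with the requirements on a dual space chain.

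Next, Theorem~\ref{hyperplanspchain} tells us that $(\mathcal N_{\bsalpha^{(m)}})_{m \ge 1}$ is a dual space chain. Feeding this chain into Theorem~\ref{chp5th2} produces generating matrices $C_1,\dots,C_s \in \FF_q^{\NN\times\NN}$ of a strict digital $(\Tfett,s)$-sequence over $\FF_q$ --- which we name $\mathcal S_{\bsalpha}$ --- whose quality function satisfies $\Tfett(m) = m - \delta_m(\mathcal N_{\bsalpha^{(m)}}) + 1$ for all $m \in \NN$, and whose $m$th overall generating matrix has row space $\mathcal N_{\bsalpha^{(m)}}^\perp$ (a fact we do not need here but may wish to state for completeness).

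It then remains only to convert the minimum distance into the figure of merit. Applying the identity recorded in~\eqref{figmer} with modulus $f = f_m = x^m$ and element $\bsalpha^{(m)} \in R_m^s$ gives $\delta_m(\mathcal N_{\bsalpha^{(m)}}) = \rho(\bsalpha^{(m)}) + 1$; substituting this into the formula from the previous step yields $\Tfett(m) = m - \rho(\bsalpha^{(m)})$ for all $m \in \NN$, which is the assertion. I do not expect a genuine obstacle: the argument is an assembly, and the only delicate points are the bookkeeping ones flagged above --- confirming that $\mathcal N_{\bsalpha^{(m)}}$ is non-trivial of the right dimension (so that $\delta_m$ is the true minimum and the hypotheses of Theorem~\ref{chp5th2} hold) and that the $\rho$ appearing in~\eqref{figmer} is evaluated with respect to the very modulus $x^m$ that is used to build the hyperplane sequence.
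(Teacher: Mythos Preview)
Your proposal is correct and follows exactly the route the paper indicates: the corollary is obtained simply by combining Theorem~\ref{hyperplanspchain}, Theorem~\ref{chp5th2}, and the identity~\eqref{figmer}. The additional bookkeeping you supply (surjectivity of $\bfk\mapsto\bsalpha^{(m)}\cdot\bfk$, the exact dimension $(s-1)m$ of $\mathcal N_{\bsalpha^{(m)}}$, and the resulting non-triviality ensuring $\delta_m$ is a genuine minimum) is accurate and makes the implicit checks explicit, but no further argument is required.
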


\begin{definition}[hyperplane sequence]\rm
The strict digital $(\Tfett,s)$-sequence $\mathcal S_{\bsalpha}$ over $\FF_q$ is called a {\it hyperplane sequence} associated with $\bsalpha$.
\end{definition}

\begin{theorem}\label{udt}
Let $\bsalpha=(\alpha_1,\ldots,\alpha_s)\in Y^s$. The hyperplane sequence $\mathcal S_{\bsalpha}$ over $\FF_q$ is uniformly distributed modulo one, if and only if $\alpha_1,\ldots,\alpha_s$ are linearly independent over $\FF_q[x]$.
\end{theorem}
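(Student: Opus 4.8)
The plan is to recast the statement as one about the asymptotic size of the figure of merit, and then to reduce it, via a compactness argument, to the presence (or absence) of a polynomial linear relation among $\alpha_1,\dots,\alpha_s$.

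\textbf{Reduction.} First I would invoke Corollary~\ref{co1}: $\mathcal S_{\bsalpha}$ is a strict digital $(\Tfett,s)$-sequence over $\FF_q$ with $\Tfett(m)=m-\rho(\bsalpha^{(m)})$, so $m-\Tfett(m)=\rho(\bsalpha^{(m)})$ for all $m\in\NN$. By the uniform-distribution criterion for strict digital $(\Tfett,s)$-sequences recalled above (\cite[Theorem~4.86]{DP2010}), $\mathcal S_{\bsalpha}$ is uniformly distributed modulo one if and only if $\rho(\bsalpha^{(m)})\to\infty$ as $m\to\infty$. Setting $\mu(m):=\min_{\bfk\in\mathcal N'_{\bsalpha^{(m)}}\setminus\{\bszero\}}\sum_{i=1}^{s}\deg(k_i)$ — a finite integer (the minimum of a finite nonempty set, since $R_m^{s}$ is finite and $\dim\mathcal N'_{\bsalpha^{(m)}}\ge(s-1)m\ge1$) — formula \eqref{figmer} gives $\rho(\bsalpha^{(m)})=s-1+\mu(m)$. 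Hence it remains to show that $\mu(m)\to\infty$ if and only if $\alpha_1,\dots,\alpha_s$ are linearly independent over $\FF_q[x]$. Throughout I identify an element of $R_m=\FF_q[x]/(x^m)$ with its representative of degree $<m$ and use the standard convention $\deg 0:=-1$, so that $\sum_{i=1}^{s}\deg(k_i)\le N$ entails $\deg(k_i)\le N+s-1$ for every $i$.

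\textbf{Linear dependence $\Rightarrow$ not u.d.} Here I would argue directly. Given a nontrivial relation $\sum_{i=1}^{s}g_i\alpha_i=0$ with $g_1,\dots,g_s\in\FF_q[x]$ not all zero, for every $m>\max_i\deg g_i$ the reduction $\bfg:=(g_1,\dots,g_s)\bmod x^m$ is a nonzero element of $R_m^{s}$ satisfying $\bsalpha^{(m)}\cdot\bfg\equiv\sum_{i=1}^{s}\alpha_i g_i\equiv 0\pmod{x^m}$, hence $\bfg\in\mathcal N'_{\bsalpha^{(m)}}\setminus\{\bszero\}$ and $\mu(m)\le\sum_{i=1}^{s}\deg g_i$. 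Thus $\rho(\bsalpha^{(m)})$ stays bounded and $\mathcal S_{\bsalpha}$ is not uniformly distributed.

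\textbf{Linear independence $\Rightarrow$ u.d.} This is the substantial direction, and I would argue by contradiction. Assuming $\alpha_1,\dots,\alpha_s$ linearly independent over $\FF_q[x]$ but $\mu(m)\not\to\infty$, there are $N\in\NN_0$ and an infinite $S\subseteq\NN$ with $\mu(m)\le N$ for all $m\in S$; for each $m\in S$ I pick $\bfk^{(m)}=(k_1^{(m)},\dots,k_s^{(m)})\in\mathcal N'_{\bsalpha^{(m)}}\setminus\{\bszero\}$ with $\sum_{i=1}^{s}\deg k_i^{(m)}\le N$. Then every $k_i^{(m)}$ has degree $\le N+s-1$, so all the tuples $\bfk^{(m)}$, $m\in S$, lie in the \emph{finite} set of polynomial $s$-tuples of componentwise degree at most $N+s-1$; by the pigeonhole principle a fixed $\bfk^{\ast}\in\FF_q[x]^{s}$ equals $\bfk^{(m)}$ for infinitely many $m\in S$, and $\bfk^{\ast}\ne\bszero$ as a polynomial tuple, because $\bfk^{(m)}\ne\bszero$ in $R_m^{s}$ while $\deg k_i^{(m)}<m$. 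For each such $m$, since $\bsalpha^{(m)}\equiv\bsalpha\pmod{x^m}$ componentwise, the relation $\bsalpha^{(m)}\cdot\bfk^{\ast}\equiv 0\pmod{x^m}$ reads $\sum_{i=1}^{s}\alpha_i k_i^{\ast}\equiv 0\pmod{x^m}$ in $\FF_q[[x]]$; as this holds for arbitrarily large $m$, I conclude $\sum_{i=1}^{s}\alpha_i k_i^{\ast}=0$ in $\FF_q[[x]]$, a nontrivial $\FF_q[x]$-linear relation among the $\alpha_i$, contradicting the hypothesis. Hence $\mu(m)\to\infty$, so $m-\Tfett(m)\to\infty$, and $\mathcal S_{\bsalpha}$ is uniformly distributed modulo one. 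I expect this last compactness/pigeonhole step — distilling an honest polynomial relation out of the mere existence of short dual vectors at infinitely many levels $m$ — to be the only real obstacle; everything else is bookkeeping with reductions modulo $x^m$.
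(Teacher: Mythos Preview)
Your proof is correct and follows essentially the same route as the paper's: reduce via Corollary~\ref{co1} and \cite[Theorem~4.86]{DP2010} to the condition $\rho(\bsalpha^{(m)})\to\infty$, then handle each direction by exhibiting or extracting a fixed nonzero polynomial tuple using the pigeonhole principle. Your version is in fact slightly more careful in negating the limit (you work along an infinite subset $S$ rather than claiming boundedness for all $m$), but the substance of the argument is identical.
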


\begin{proof}
By \cite[Theorem~4.86]{DP2010} a strict digital $(\Tfett,s)$-sequence over $\FF_q$ is uniformly distributed modulo one, if and only if $\lim_{m \rightarrow \infty} m-\Tfett(m)=\infty$. Hence, by Corollary~\ref{co1}, the sequence $\mathcal S_{\bsalpha}$ is uniformly distributed modulo one, if and only if $$\lim_{m \rightarrow \infty}\rho(\bsalpha^{(m)})=\infty.$$

Let $\mathcal S_{\bsalpha}$ be uniformly distributed modulo one. Assume that there exist $p_1,\ldots,p_s \in \FF_q[x]$, not all of them the zero polynomial, such that $$p_1\alpha_1+\cdots+p_s \alpha _s =0 \in \FF_q[[x]].$$ Then for $m > \max_{1 \le i \le s} \deg(p_i)$ we have $(p_1,\ldots,p_s) \in \mathcal N'_{\bsalpha^{(m)}}\setminus  \{\bszero\}$ and hence $$\rho(\bsalpha^{(m)}) \le s-1+\sum_{i=1}^s \deg(p_i).$$ Thus, $\rho(\bsalpha^{(m)})$ is bounded as $m$ tends to infinity which contradicts the uniform distribution of $\mathcal S_{\bsalpha}$.

Let $\bsalpha=(\alpha_1,\ldots,\alpha_s) \in Y^s$ be such that $\alpha_1,\ldots,\alpha_s$ are linearly independent over $\FF_q[x]$. Assume that the sequence $\mathcal S_{\bsalpha}$ is not uniformly distributed modulo one. Then there exists a $K \in \NN$ such that $$\rho(\bsalpha^{(m)}) < K \ \ \ \mbox{ for all }\ \ \ m \in \NN.$$ From the definition of the figure of merit  $\rho$ it is evident that a bound on $\rho$ implies a bound on the degrees of the possible $p_i$ for a nontrivial 
linear dependence relation. Thus it follows by the pigeonhole principle that there exist $p_1,\ldots,p_s \in \FF_q[x]$, not all of them the zero polynomial, such that $$p_1 \alpha_1^{(m)}+\cdots +p_s \alpha_s^{(m)} \equiv 0 \pmod{x^m}$$ for infinitely many $m \in \NN$. This however leads to a contradiction, since by the linear independence of $\alpha_1,\ldots,\alpha_s$ over $\FF_q[x]$ it follows that $p_1\alpha_1+\cdots +p_s \alpha_s \not=0 \in \FF_q[[x]]$. Hence there exist $l \in \NN_0$ and $a_i \in \FF_q$ for $i \ge l$ and $a_l \not=0$, such that $$ p_1\alpha_1+\cdots +p_s \alpha_s =a_l x^l+a_{l+1} x^{l+1}+\cdots$$ and therefore $$p_1\alpha_1^{(m)}+\cdots +p_s \alpha_s^{(m)} \not \equiv 0 \pmod{x^m} \ \ \ \mbox{ for all }\ \ \ m >l.$$
\end{proof}

\section{Explicit Generator Matrices}\label{secEGM}

First we briefly recall the explicit form of hyperplane net generator matrices
 (cf. \cite{PDP} and \cite[Section~11]{DP2010}).
From this, the shape of the  infinite generator matrices of hyperplane sequences will then be evident.

 Going back to the general ring $R_m=\FF_q[x]/(f)$ of Definition \ref{defhypnet}, 
 an injective ring homomorphism $\Psi:R_m\to\FF_q^{m\times m}$ can be constructed by setting 
  $\Psi(x)$ equal to the companion matrix of the generating polynomial  $f(x)$ and, 
 for all further elements, requesting the ring homomorphism property,
  $\forall x,y,z\in R_m:\: 
  \Psi(x y-z)=\Psi(x)\Psi(y)-\Psi(z)$. The validity of this construction can be realized using the fact that the minimal polynomial of a companion matrix is the polynomial itself. Also, it is easily verified that this homomorphism condition is equivalent to setting
 \[\forall a_1,\dots,a_m\in\FF_q:\: \Psi\left(\sum_{i=0}^{m-1}a_ix^i\right)=\sum_{i=0}^{m-1}a_i\Psi(x)^i.\] 
 Next, take the canonical basis 
 $\mathfrak E_i=\{1,x,\dots,x^{m-1}\}$ for $R_m$,  as a $\FF_q$-space. For $i=1,\dots,s$ let $B_i$ be the matrix associated to the basis change from the given basis 
 $\mathfrak B_i$ to $\mathfrak E_i$. Then the following was proven in \cite{PDP} (the version given here follows \cite[Theorem~11.5]{DP2010}; the theorem as originally stated in \cite{PDP} had a minor flaw):
 \begin{theorem}
  The  matrices $C_i:=(\Psi(\alpha_i)B_i)^\top,\,i=1,\dots,s$ can be chosen as generating
  matrices for the hyperplane net $\cP_{\bsalpha}$.
\end{theorem}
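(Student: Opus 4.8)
By Definition~\ref{defhypnet}, asserting that $C_1,\ldots,C_s$ are generating matrices of $\cP_{\bsalpha}$ amounts to asserting that the overall generating matrix
\[
C=(C_1^\top\;\;C_2^\top\;\;\ldots\;\;C_s^\top)=(\Psi(\alpha_1)B_1\;\;\Psi(\alpha_2)B_2\;\;\ldots\;\;\Psi(\alpha_s)B_s)\in\FF_q^{m\times sm}
\]
has row space $\mathcal C_{\bsalpha}=\mathcal N_{\bsalpha}^\perp$. Since for any matrix over $\FF_q$ the row space is the orthogonal complement of the (column) null space, the plan is to prove the equivalent statement that the null space of $C$ equals $\mathcal N_{\bsalpha}=\theta(\mathcal N'_{\bsalpha})$, i.e.\ the dual space of $\cP_{\bsalpha}$ as described right after Definition~\ref{defhypnet}.

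The key structural input I would establish first is that, for each $i$, the matrix $\Psi(\alpha_i)$ represents the $\FF_q$-linear map $R_m\to R_m$, $y\mapsto\alpha_i y$, with respect to the canonical basis $\mathfrak E_i=\{1,x,\ldots,x^{m-1}\}$. Indeed $\Psi(x)$, being the companion matrix of $f$, is precisely the matrix of multiplication by $x$ on $R_m$ in that ordered basis; and by the identity $\Psi\big(\sum_j a_jx^j\big)=\sum_j a_j\Psi(x)^j$ recalled just above the theorem, $\Psi(\alpha_i)$ is the same polynomial expression in $\Psi(x)$, hence represents multiplication by $\alpha_i$. I would also fix the convention for $B_i$: if $k\in R_m$ has coordinate vector $\bfa$ relative to $\mathfrak B_i$, then $B_i\bfa$ is its coordinate vector relative to $\mathfrak E_i$.

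Granting this, I would finish with a short computation. Since $\theta$ (attached to $R_m$ as in Definition~\ref{defhypnet}) is an $\FF_q$-linear isomorphism $R_m^s\to\FF_q^{sm}$, every $\bfA\in\FF_q^{sm}$ is uniquely $\bfA=\theta(\bfk)$ with $\bfk=(k_1,\ldots,k_s)\in R_m^s$, and the $i$th block $\bfa_i$ of $\bfA$ is the $\mathfrak B_i$-coordinate vector of $k_i$. Then $B_i\bfa_i$ is the $\mathfrak E_i$-coordinate vector of $k_i$, $\Psi(\alpha_i)B_i\bfa_i$ is the $\mathfrak E_i$-coordinate vector of $\alpha_i k_i$, and block multiplication gives $C\bfA=\sum_{i=1}^s\Psi(\alpha_i)B_i\bfa_i$, which (coordinates being additive) is exactly the coordinate vector of $\sum_{i=1}^s\alpha_i k_i=\bsalpha\cdot\bfk\in R_m$. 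Hence $C\bfA=\bszero$ if and only if $\bsalpha\cdot\bfk\equiv0\pmod f$, i.e.\ $\bfk\in\mathcal N'_{\bsalpha}$, i.e.\ $\bfA\in\theta(\mathcal N'_{\bsalpha})=\mathcal N_{\bsalpha}$. Thus the null space of $C$ is $\mathcal N_{\bsalpha}$ and its row space is $\mathcal N_{\bsalpha}^\perp=\mathcal C_{\bsalpha}$, as required. (It does not matter that $C$ may have rank below $m$ — which happens exactly when $\bfk\mapsto\bsalpha\cdot\bfk$ fails to be onto $R_m$ — since Definition~\ref{defhypnet} only constrains the row space of the overall generating matrix.)

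I expect the only genuinely delicate point to be the identification of $\Psi(\alpha_i)$ with multiplication by $\alpha_i$ in the canonical basis: one must check that the companion matrix really is the multiplication-by-$x$ matrix for the particular ordering $\{1,x,\ldots,x^{m-1}\}$ in use, that $\Psi$ is well defined (the minimal polynomial of the companion matrix of $f$ equals $f$, so the defining relation of $R_m$ is respected), and that the homomorphic extension coincides with $\sum_j a_jx^j\mapsto\sum_j a_j\Psi(x)^j$. The rest is bookkeeping — keeping the transposes in $C_i=(\Psi(\alpha_i)B_i)^\top$ and in the overall generating matrix, and the direction of the basis-change maps $B_i$, mutually consistent; this orientation bookkeeping is presumably where the version originally stated in \cite{PDP} slipped, and the form above is the corrected one.
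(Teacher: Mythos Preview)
Your argument is correct. The reduction to showing that the null space of the overall matrix $C=(\Psi(\alpha_1)B_1\;\ldots\;\Psi(\alpha_s)B_s)$ equals $\mathcal N_{\bsalpha}$ is sound (over any field the null space of a matrix is the orthogonal complement of its row space with respect to the standard bilinear form), and the identification of $\Psi(\alpha_i)$ with the matrix of multiplication by $\alpha_i$ in the canonical basis---which you rightly flag as the only nontrivial ingredient---follows from the fact that the companion matrix of $f$ represents multiplication by $x$ on $R_m$, together with the homomorphism property recalled just before the theorem.

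As for comparison: the paper does not actually supply a proof of this theorem. It is stated with a citation to \cite{PDP} and to \cite[Theorem~11.5]{DP2010}, noting that the original statement in \cite{PDP} contained a minor flaw. So you have provided a self-contained argument where the paper has only a reference. Your line of reasoning---interpreting $\Psi(\alpha_i)B_i$ as the composite ``change basis to canonical, then multiply by $\alpha_i$'' and reading off that $C\theta(\bfk)$ is the canonical coordinate vector of $\bsalpha\cdot\bfk$---is the natural one and is in the spirit of the cited proofs. Your closing remarks about the transpose/orientation bookkeeping and the possible rank deficiency of $C$ are also on point.
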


Now note that in the case of hyperplane sequences the matrices $B_{m,i}$ 
associated to the bases $\mathfrak B_{m,i}$ are upper triangular: this follows inductively, since 
the columns of $B_{m,i}$ for $m\geq 0$ and $i=1,\dots,s$ consist of the coordinates of $\mathfrak b_{m,i,j},\, j=1,\dots,m$ with respect to the canonical basis and we requested the last component to be equal to $0$ unless $j=m$. 
Let $\pi_m$ denote the action of taking the upper left $m\times m$ submatrix of a finite or infinite matrix.  
Then it also follows inductively that for $m'\leq m$ we have $\pi_{m'}( B_{m,i} ) = B_{m',i}$.
Hence there are infinite upper triangular matrices $B_{i}\in\FF_q^{\NN\times\NN}$ such that $\pi_m(B_i)=B_{m,i}$.

Denote with $\Psi^{(m)}$ the ring homomorphism as discussed above, associated to the 
extension $\FF_{q}[x]/(x^{m})$
 used in the construction of the hyperplane net  $\cP_{\bsalpha^{(m)}}$. It can be given explicitly
 quite easily:  first note that the companion matrix of the polynomial $x^{m}$ is the matrix
with entries of $1$ in the first lower subdiagonal and $0$ elsewhere,
so $\Psi^{(m)}(x)=(\delta_{i,j+1})_{i,j=0,\dots,m-1}$. It follows that $\Psi^{(m)}(x^{k})=
(\delta_{i,j+k})_{i,j=0,\dots,m-1}$ for $k=0,\dots,m-1$ and consequently
\[
  \Psi^{(m)}(\alpha_i^{(m)} )= \begin{pmatrix}
    a_{i,0}  & 0        & 0    & \dots & 0\\
    a_{i,1} & a_{i,0} & 0   & \dots  & 0\\
     \vdots &              &     &            & \vdots \\
     a_{i,m-1} & a_{i,m-2} & \dots & \dots & a_{i,0}
  \end{pmatrix}\ \
 \text{ where }\ \ 
 \alpha_i^{(m)}=\sum_{j\geq0} a_{i,j}x^j,
\]
so $\Psi^{(m)}(\alpha_i^{(m)})$ is a lower triangular Toeplitz matrix. We see that each
of these matrices is the upper left $m\times m$ submatrix of an (also lower triangular Toeplitz) infinite matrix in $\FF_q^{\NN\times\NN}$ which we denote as $\Psi(\alpha_i)$.

Thus we have shown that the generator matrices of $\cP_{\bsalpha^{(m)}}$ can be chosen 
as $C_{m,i}:=(\pi_m(\Psi(\alpha_i)) \pi_m(B_i))^\top$. However, by the triangular structure of the
matrices also the product $\Psi(\alpha)B_i$ is admissible and we have furthermore
$C_{m,i}= \pi_m(\Psi(\alpha_i) B_i)^\top$. This implies that the first block
$[\bsx_{0}]_{b,m},\dots,[\bsx_{b^{m}-1}]_{b,m}$ of the sequence $\cP_{\bsalpha}$ coincides with
$\cP_{\bsalpha^{(m)}}$. We summarize:

\begin{theorem}
Let $\bsalpha \in Y^s$ and let $\mathfrak B_{m,i}$ be as in Section \ref{defhypseq}. For $i=1,\ldots,s$ define the matrices $B_i\in\FF_q^{\NN\times\NN}$ by setting, for $j\in \NN$, the first $j$ entries of the $j$th column equal to the coefficients of $\mathfrak b_{j,i,j}$ modulo $x^j$ and zero elsewhere. For $\alpha_i=\sum_{j\geq0}a_{i,j}x^{j}\in\FF_q[[x]]$ for $i=1,\dots,s$ associate infinite lower triangular Toeplitz matrices $\Psi(\alpha_i)\in\FF_q^{\NN\times\NN}$ by 
\[
  \Psi^{}(\alpha_i^{} ):= \begin{pmatrix}
    a_{i,0}  & 0        & 0    & \dots  & \dots \\
    a_{i,1} & a_{i,0} & 0   & \dots  &  \\
     \vdots &      \ddots        &  \ddots   & \ddots&  \vdots  \\
     a_{i,m-1} & a_{i,m-2} & \dots  & a_{i,0} &  \ddots\\
     \vdots & & & & \ddots 
  \end{pmatrix}.
\]
Then,  the generator matrices of $\cP_{\bsalpha}$ are given 
by $C_{i}=(\Psi(\alpha_i^{})B_i)^\top$.
\end{theorem}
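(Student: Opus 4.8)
The plan is to reduce the statement for the infinite sequence to the explicit form of the generator matrices of the \emph{finite} hyperplane nets $\cP_{\bsalpha^{(m)}}$ recalled just above (the theorem $C_i=(\Psi(\alpha_i)B_i)^\top$ for $R_m=\FF_q[x]/(f)$, applied with $f=x^m$ for each $m\in\NN$), by verifying that the infinite matrices $C_i:=(\Psi(\alpha_i)B_i)^\top$ truncate correctly. Write $\pi_m$ for the operation of taking the upper-left $m\times m$ block of a finite or infinite matrix. By Theorems~\ref{hyperplanspchain} and \ref{chp5th2}, the hyperplane sequence $\cP_{\bsalpha}=\cS_{\bsalpha}$ is a digital sequence whose $m$th overall generating matrix has row space $\mathcal N_{\bsalpha^{(m)}}^\perp=\mathcal C_{\bsalpha^{(m)}}$, which is precisely the row space prescribed for the overall generating matrix of the finite hyperplane net $\cP_{\bsalpha^{(m)}}$; hence ``are given by'' is to be read, exactly as in the finite case, as ``may be chosen to be'', and it suffices to prove that $\pi_m(C_i)=C_{m,i}$ for every $m$ and $i$, where $C_{m,i}$ denotes a system of generating matrices of $\cP_{\bsalpha^{(m)}}$.

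First I would exhibit the two infinite matrices as common extensions of their finite counterparts. For the basis-change matrices: the requirement $\mathfrak b_{m+1,i,j}=(\mathfrak b_{m,i,j},0)$ for $j\le m$ forces, in the canonical basis $\{1,x,x^2,\dots\}$, the $j$th column of $B_{m,i}$ to vanish beyond position $j$, so $B_{m,i}$ is upper triangular; an easy induction then gives $\pi_{m'}(B_{m,i})=B_{m',i}$ for $m'\le m$, so there is a single infinite upper triangular $B_i$ with $\pi_m(B_i)=B_{m,i}$ whose $j$th column is the coefficient vector of $\mathfrak b_{j,i,j}\bmod x^j$, as in the statement. For the ring-homomorphism matrices: since the companion matrix of $x^m$ is the nilpotent down-shift, $\Psi^{(m)}(x^k)$ is the $k$th sub-diagonal shift, so $\Psi^{(m)}(\alpha_i^{(m)})$ is the lower triangular Toeplitz matrix formed from the coefficients $a_{i,0},a_{i,1},\dots$ of $\alpha_i$; these are likewise nested, with common infinite extension the lower triangular Toeplitz matrix $\Psi(\alpha_i)$ of the statement. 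The finite generator-matrix theorem (with $f=x^m$) then gives that $C_{m,i}:=(\Psi^{(m)}(\alpha_i^{(m)})\,B_{m,i})^\top=(\pi_m(\Psi(\alpha_i))\,\pi_m(B_i))^\top$ is a system of generating matrices of $\cP_{\bsalpha^{(m)}}$.

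The key linking step is the elementary observation that truncation commutes with the relevant product: if $A\in\FF_q^{\NN\times\NN}$ is lower triangular and $B\in\FF_q^{\NN\times\NN}$ is upper triangular, then in $(AB)_{jk}=\sum_\ell A_{j\ell}B_{\ell k}$ only indices $\ell\le\min(j,k)$ contribute, so $AB$ is well defined entrywise and $\pi_m(AB)=\pi_m(A)\,\pi_m(B)$ for every $m$. Applying this with $A=\Psi(\alpha_i)$ and $B=B_i$, and using $\pi_m(M^\top)=\pi_m(M)^\top$, I obtain $\pi_m\big((\Psi(\alpha_i)B_i)^\top\big)=(\pi_m(\Psi(\alpha_i))\,\pi_m(B_i))^\top=C_{m,i}$ for all $m$ and $i$, which is exactly what the reduction in the first paragraph demanded; the theorem follows.

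No genuine obstacle is expected, since every step is structural bookkeeping. The two points that require care are the triangularity conventions --- $\Psi(\alpha_i)$ lower triangular and $B_i$ upper triangular, so that $\Psi(\alpha_i)B_i$ is entrywise finite and the truncation identity of the previous paragraph applies (the opposite order of the factors need not even be well defined) --- and making the inductions $\pi_{m'}(B_{m,i})=B_{m',i}$ and $\pi_m(\Psi(\alpha_i))=\Psi^{(m)}(\alpha_i^{(m)})$ fully explicit, which is precisely where the special choice of generating polynomials $f_m=x^m$ and the compatible bases of Section~\ref{defhypseq} are used.
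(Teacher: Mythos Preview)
Your proposal is correct and follows essentially the same route as the paper: exhibit $B_i$ and $\Psi(\alpha_i)$ as the common infinite extensions of the nested finite matrices $B_{m,i}$ and $\Psi^{(m)}(\alpha_i^{(m)})$, invoke the finite generator-matrix theorem with $f=x^m$, and then use the triangular structure to conclude $\pi_m(\Psi(\alpha_i)B_i)=\pi_m(\Psi(\alpha_i))\pi_m(B_i)$. The only difference is that you spell out the truncation identity $\pi_m(AB)=\pi_m(A)\pi_m(B)$ for lower-times-upper triangular products explicitly, whereas the paper simply asserts it ``by the triangular structure of the matrices''.
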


\begin{remark}\rm
  In particular cases, e.g., if the $B_{i}$ have finite columns (or, equivalently, all $\mathfrak b_{i,j}$
   are polynomials) all generator matrices have the finite-column property, i.e., almost all entries
  are $0$. This is the property required for the definition of `digital sequences in the narrow sense'.
  Hence, if the bases include nonpolynomial Laurent series, and consequently
   the matrices would lack this requirement of finite columns, then for sequences in the `narrow' 
   sense we would only be able to state:
  for $m> 0$, the generator matrices of $\pi_m(\cP_{\bsalpha^{(m)}})$ can be chosen
 as $C_{m,i}=\pi_m(\Psi(\alpha_i^{})B_i)^\top$.
\end{remark}\begin{remark}\rm
 Note that $C_{i}=B^\top \Psi(\alpha_i)^\top$ is the product of a lower triangular matrix with 
 an upper triangular Toeplitz matrix and can therefore be interpreted as a linear scrambling 
 (in the sense of Faure and Tezuka \cite{FauTez}) of the  matrices $\Psi(\alpha_i)^\top$.
\end{remark}

\section{Relation to Larcher-Niederreiter Sequences}\label{secRelLN}

 Recall that Larcher-Niederreiter sequences, as defined in \cite[Ch.~4.4, Eq.~4.64]{niesiam} 
 and treated  in detail in \cite{LarNie} are based on elements 
 chosen from $\FF_q((z))$ (where $z:=1/x$), or rather, actually the subring  
 $z\FF_{q}[[z]]$ is used for the definition. 
 It is a natural question to ask what the relation to  hyperplane sequences is.
 
   The shortest way to describe Larcher-Niederreiter sequences
  is by defining them as being generated by arbitrary infinite Hankel matrices; in the 
  construction cited above, the entries of a matrix map bijectively to the positive-degree
  coefficients of a Laurent series.
    It is well-known that finite Hankel matrices correspond to polynomial lattice point sets
   (PLPSs), see \cite[Chapter~10]{DP2010}. Hence initial $b$-adic blocks of Larcher-Niederreiter sequences are, up to truncation, equivalent to PLPSs.
 
  It has been shown in \cite[Theorem~2]{cyczoo} that PLPSs are also a special case of
  hyperplane nets, when choosing the canonical basis: This choice results in the
  $B_i$ being identity matrices. So in the case of the $\cP_{\bsalpha^{(m)}}$ (with the canonical  
  basis) for any $m>0$ we have $C_{m,i}=\Psi(\alpha_i^{(m)})^\top$
   as generator matrices. Let $P^{(m)}$ be the horizontally reflected
   identity matrix. Then the matrices $C_{m,i} P^{(m)}$ still
   generate the same net, since the regular matrix $P^{(m)}$ only effects a reordering of the point set,   
   but the matrices are now (triangular) Hankel matrices, hence generate a PLPS.
   
   We will show in the following that, however, the matrices $C_{i}=$ ``$\lim_{m }C_{m,i}$''
    do \emph{not} generate a Larcher-Niederreiter sequence
   (to motivate this intuitively, consider that, as $m$ grows, the matrices $P^{(m)}$ shift the first few points of   $\cP_{\bsalpha^{(m)}}$   ever further to the end of the sequence, effectively removing them in the limit). 
     For our proof we first restate \cite[Theorem~3]{DN2009}
     (cf. also \cite[Th.~7.28]{DP2010}) and add a corollary.
   
   \begin{theorem}\label{thm9} 
    Let $(\cN_m)_{m\geq 1}$ be a dual space chain. The generating matrices $C_1,...,C_s$ are
    unique up to a multiplication of $C_1,...,C_s$ from the right with the same nonsingular upper triangular (NUT) matrix, if and only if 
   $\dim(\cN_{m+1,m}) = sm - m - 1$ for all $m \in \NN$
    \end{theorem}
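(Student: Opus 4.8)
The plan is to pass to the ``column picture''. For $i=1,\dots,s$ and $j\in\NN$ let $\bsc_{i,j}\in\FF_q^{\NN}$ be the $j$th column of $C_i$, put $\gamma_j:=(\bsc_{1,j},\dots,\bsc_{s,j})\in(\FF_q^{\NN})^{s}$, and let $P_m\colon(\FF_q^{\NN})^{s}\to\FF_q^{sm}$ truncate each of the $s$ components to its first $m$ entries. Since the $j$th row of $C^{(m)}$ is the concatenation of the first $m$ entries of the $j$th columns of $C_1,\dots,C_s$, the matrices $C_1,\dots,C_s$ are generating matrices for a digital sequence whose dual space chain is $(\cN_m)_{m\ge1}$ exactly when $\mathrm{span}\{P_m(\gamma_1),\dots,P_m(\gamma_m)\}=\Ccal_m:=\cN_m^{\perp}$ for every $m$, and passing from $C_1,\dots,C_s$ to $C_1T,\dots,C_sT$ for a common NUT matrix $T$ amounts to $\gamma_j\mapsto\sum_{l\le j}T_{l,j}\gamma_l$, equivalently to left multiplying every $C^{(m)}$ by the invertible $(T^{(m)})^{\top}$; in particular the row spaces $\Ccal_m$, hence the chain, are preserved, so the theorem asserts that this operation acts transitively on the set of such ``realizing'' systems. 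Finally I decode the hypothesis: with $\pi_m\colon\FF_q^{s(m+1)}\to\FF_q^{sm}$ the deletion of the $s$ ``new'' coordinates (the transpose of the zero-insertion map of Definition~\ref{defdualseq}), one has $\cN_{m+1,m}^{\perp}=\pi_m(\Ccal_{m+1})$, so $\dim(\cN_{m+1,m})=sm-\dim\Ccal_{m+1}+\dim\ker(\pi_m|_{\Ccal_{m+1}})$; as $\dim\Ccal_{m+1}\le m+1$, the equality $\dim(\cN_{m+1,m})=sm-m-1$ holds for all $m$ if and only if $\dim\Ccal_m=m$ and $\ker(\pi_m|_{\Ccal_{m+1}})=0$ for all $m$. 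I work under the non-degeneracy $\dim\Ccal_m=m$ for all $m$ (forced by the hypothesis, and in fact needed for the ``only if'' part to hold --- a totally degenerate chain has only the zero generating matrices, hence a spurious uniqueness), so that $P_m(\gamma_1),\dots,P_m(\gamma_m)$ always form a basis of $\Ccal_m$ and only $\ker(\pi_m|_{\Ccal_{m+1}})=0$ is left to use.

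For ``if'', suppose $\ker(\pi_m|_{\Ccal_{m+1}})=0$ for all $m$; then $\pi_m$ restricts to an isomorphism $\Ccal_{m+1}\xrightarrow{\sim}\pi_m(\Ccal_{m+1})\supseteq\Ccal_m$, whose inverse I call the canonical lift $\phi_m$. First I check that any realizing system satisfies $P_{m+1}(\gamma_j)=\phi_m(P_m(\gamma_j))$ for $j\le m$: indeed $P_{m+1}(\gamma_j)\in\Ccal_{m+1}$ and $\pi_m(P_{m+1}(\gamma_j))=P_m(\gamma_j)$, and such a preimage is unique. Iterating, each $\gamma_j$ is the $\phi$-pushforward of its level-$j$ truncation $P_j(\gamma_j)$, and the latter lies in $\Ccal_j\setminus\mathrm{span}\{P_j(\gamma_l):l<j\}$; moreover $\mathrm{span}\{P_j(\gamma_l):l<j\}=\phi_{j-1}(\Ccal_{j-1})$ (apply $\phi_{j-1}$ to the basis $\{P_{j-1}(\gamma_l):l\le j-1\}$ of $\Ccal_{j-1}$), a subspace depending only on the chain. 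Now, given two realizing systems $(\gamma_j)$ and $(\gamma'_j)$, expand $P_j(\gamma'_j)=\sum_{l\le j}T_{l,j}P_j(\gamma_l)$ in the basis of $\Ccal_j$; since $P_j(\gamma'_j)\notin\phi_{j-1}(\Ccal_{j-1})=\mathrm{span}\{P_j(\gamma_l):l<j\}$ we get $T_{j,j}\ne0$. Because the lifts compose, $\phi^{(j')}_{l}=\phi^{(j')}_{j}\circ\phi^{(j)}_{l}$ for $l\le j\le j'$, both $\gamma'_j$ and $\sum_{l\le j}T_{l,j}\gamma_l$ are $\phi$-pushforwards of the same level-$j$ vector, hence are equal; so $T=(T_{l,j})$ is NUT and $C'_i=C_iT$ for every $i$.

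For ``only if'' I argue by contraposition. Suppose $\ker(\pi_{m_0}|_{\Ccal_{m_0+1}})\ne0$ for some $m_0$; fix a realizing system $(\gamma_j)$ and a nonzero $e\in\ker(\pi_{m_0}|_{\Ccal_{m_0+1}})\subseteq\Ccal_{m_0+1}$. The preimage under $\pi_{m_0}$ of $P_{m_0}(\gamma_j)$ inside $\Ccal_{m_0+1}$ is now a nontrivial coset, so one can build a second realizing system $(\gamma'_j)$ that coincides with $(\gamma_j)$ through level $m_0$ but, at level $m_0+1$, shifts one of its first $m_0$ columns by $e$ (re-choosing the $(m_0+1)$st column if necessary), arranged so that the altered level-$(m_0+1)$ vectors still span $\Ccal_{m_0+1}$ --- possible since $\dim\Ccal_{m_0+1}=m_0+1>m_0$ --- and then extends via Theorem~\ref{chp5th2}. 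As $\pi_{m_0}(e)=0$ the two systems agree up to level $m_0$; but if a NUT matrix $T$ had $\gamma'_j=\sum_{l\le j}T_{l,j}\gamma_l$, then evaluating at level $m_0$ and invoking the linear independence of $P_{m_0}(\gamma_1),\dots,P_{m_0}(\gamma_{m_0})$ would force $T$ to be the identity on its first $m_0$ columns, whence $\gamma'_j=\gamma_j$ for all $j\le m_0$ --- contradicting the shift by $e$. So the generating matrices are not unique up to a common NUT factor, proving the contrapositive.

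The bookkeeping in the first paragraph, and the compatibilities of the lifts $\phi_m$ used in the second, are routine; the genuinely delicate point --- and the one I expect to cost the most work --- is in the third paragraph, namely that the extra preimage freedom can always be spent without destroying the spanning property at level $m_0+1$. For $q\ge3$ one simply rescales $e$ to dodge a single forbidden coefficient; for $q=2$ one may instead have to shift a first-$m_0$ column by $e$ and re-select the $(m_0+1)$st column simultaneously, so that the level-$(m_0+1)$ change-of-basis matrix stays invertible.
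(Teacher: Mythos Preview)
The paper does not prove Theorem~\ref{thm9}: it is explicitly quoted from \cite[Theorem~3]{DN2009} (see also \cite[Theorem~7.28]{DP2010}) and used only as a black box to obtain the subsequent corollary. So there is no in-paper argument to compare against.

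Your sketch is essentially the natural proof and is correct in outline. Translating the dimension condition into ``$\dim\Ccal_m=m$ and $\pi_m|_{\Ccal_{m+1}}$ injective for all $m$'' is exactly the right move, and your ``if'' direction via the canonical lifts $\phi_m$ is clean; the fact that $\phi_{j-1}(\Ccal_{j-1})$ depends only on the chain (not on the realizing system) is the heart of the rigidity. Your observation that the totally degenerate chain gives a spurious uniqueness, so that some non-degeneracy must be read into the statement, is also well taken. Two places deserve a line more of justification. First, your appeal to Theorem~\ref{chp5th2} to extend the perturbed level-$(m_0{+}1)$ data presumes that the construction in \cite{DN2009} is inductive and may be restarted from any admissible $C^{(m_0+1)}$; this is true (it is how that theorem is proved), but you should say so rather than cite the bare existence statement. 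Second, for the $q=2$ case you can dispense with case analysis entirely: since $\pi_{m_0}(e)=0$, the modified vectors $P_{m_0+1}(\gamma_1),\dots,P_{m_0+1}(\gamma_{j_0})+e,\dots,P_{m_0+1}(\gamma_{m_0})$ project under $\pi_{m_0}$ to the basis $P_{m_0}(\gamma_1),\dots,P_{m_0}(\gamma_{m_0})$ of $\Ccal_{m_0}$ and are therefore automatically linearly independent in $\Ccal_{m_0+1}$; then any completion to a basis furnishes $P_{m_0+1}(\gamma'_{m_0+1})$. With these two points made explicit your argument is complete.
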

   
   \begin{corollary}
   If a sequence is generated by regular matrices $C_{1},\dots,C_{s}$ that fulfill 
   \begin{equation}\label{stern}\mathrm{rank}( C^{(m,m+1)})=\mathrm{rank}( C^{(m)})+1=m+1 
   \ \ \mbox{ for all }\ \ m\in \NN,\end{equation}  where
   $C^{(m,m+1)} := (C_{1}^{(m,m+1)\top},\dots,C_{s}^{(m,m+1)\top})$
   and the $C_{i}^{(m,m+1)}$ are the left upper $m\times (m+1)$ submatrices of $C_{i}$,
   then any $s$ matrices generating the same sequence have the form $C'_{i}=C_{i}P$,
   with $P$ an arbitrary NUT matrix.
   \end{corollary}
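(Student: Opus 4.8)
The plan is to derive the corollary directly from Theorem~\ref{thm9}; the only real work is to recognise condition~\eqref{stern} as precisely the hypothesis $\dim(\cN_{m+1,m}) = sm-m-1$ of that theorem, evaluated for the dual space chain canonically attached to $C_1,\dots,C_s$.

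First I would attach to $C_1,\dots,C_s$, for every $m\in\NN$, the space $\cN_m\subseteq\FF_q^{sm}$ defined as the null space of the overall matrix $C^{(m)}=(C_1^{(m)\top}\cdots C_s^{(m)\top})$; equivalently $\cN_m=\Ccal_m^{\perp}$, where $\Ccal_m$ is the $m$th row space of the digital sequence generated by $C_1,\dots,C_s$. Since $C^{(m)}$ has $m$ rows and, by~\eqref{stern}, rank exactly $m$, we get $\mathrm{codim}(\cN_m)=m$, so the codimension constraint of Definition~\ref{defdualseq} is met.

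The substantive step is to show $\cN_{m+1,m}=\ker C^{(m,m+1)}$, where $\cN_{m+1,m}$ is built from $\cN_{m+1}$ exactly as in Definition~\ref{defdualseq}. Unwinding the definition, $(\bfa_1,\dots,\bfa_s)\in\cN_{m+1,m}$ iff $\sum_{i=1}^s C_i^{(m+1)\top}(\bfa_i,0)^\top=\bszero$. The appended zero coordinate annihilates the last column of each $C_i^{(m+1)\top}$, so only the first $m$ columns of $C_i^{(m+1)\top}$ act, and these are precisely $C_i^{(m,m+1)\top}$, the transpose of the left-upper $m\times(m+1)$ submatrix of $C_i$. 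Hence the defining condition reads $C^{(m,m+1)}(\bfa_1,\dots,\bfa_s)^\top=\bszero$, giving $\cN_{m+1,m}=\ker C^{(m,m+1)}$ and therefore $\dim(\cN_{m+1,m})=sm-\mathrm{rank}(C^{(m,m+1)})$. Since $C^{(m)}$ is $C^{(m,m+1)}$ with its last row deleted, $\ker C^{(m,m+1)}\subseteq\ker C^{(m)}$, i.e. $\cN_{m+1,m}\subseteq\cN_m$. Now invoking~\eqref{stern} yields $\dim(\cN_{m+1,m})=sm-m-1=\dim(\cN_m)-1$ for all $m$, which simultaneously confirms that $(\cN_m)_{m\ge1}$ is a dual space chain and supplies the equality hypothesis of Theorem~\ref{thm9}. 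That theorem then gives the claim: the generating matrices of the sequence are unique up to a common right NUT factor. (Conversely any NUT $P$ is admissible, since $\pi_m(C_iP)=\pi_m(C_i)\pi_m(P)$ with $\pi_m(P)$ again NUT, so passing from $C_i$ to $C_iP$ leaves every $\Ccal_m$, hence the whole chain and the sequence, unchanged.)

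I expect the only delicate point to be the index bookkeeping in the identity $\cN_{m+1,m}=\ker C^{(m,m+1)}$: one must keep straight which submatrix of $C_i$ is meant, the effect of transposition on the phrase ``left-upper $m\times(m+1)$'', and the fact that appending a zero to $\bfa_i$ removes exactly the last column of $C_i^{(m+1)\top}$. Once that identification is set up carefully, the corollary is a one-line consequence of Theorem~\ref{thm9}.
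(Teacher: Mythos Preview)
Your proof is correct and follows essentially the same route as the paper's own argument: identify $\cN_m$ as the null space of $C^{(m)}$ and $\cN_{m+1,m}$ as the null space of $C^{(m,m+1)}$, then read off the dimension $sm-m-1$ from~\eqref{stern} and apply Theorem~\ref{thm9}. Your write-up is in fact more careful than the paper's, which simply asserts that ``$\cN_{m+1,m}$ is the dual space of the row space of $C^{(m,m+1)}$'' without unpacking the index bookkeeping, and your added remark on the converse (that any NUT $P$ preserves each $\Ccal_m$) is a welcome clarification.
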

   
   \begin{proof}
   This follows from Theorem~\ref{thm9} by noting, as in Section \ref{sec:dlty}, 
   that  $\cN_{m}$ is the dual space of  $\mathcal C_{m},$ the row space of $C^{(m,m)}$. 
   Due to the regularity $\mathcal C_{m}$ has dimension $m$, hence its dual $\cN_{m}$ has
   dimension $sm-m$. Furthermore, $\cN_{m+1,m}$ is the dual space of the
   row space of $C^{(m,m+1)}$. Hence, if the rank condition is fulfilled, the dimension of
   $\cN_{m+1,m}$ is $sm-\mathrm{rank}(C^{(m,m+1)})=sm-m-1$ and the theorem can be applied.
   \end{proof}
   
   Now what is left is to verify the condition \eqref{stern} for the $C_{m,i}$ as given above. Then, by
   the corollary, any matrices generating the same sequence would have to be upper triangular,
   hence are never Hankel matrices as would be the case for Larcher-Niederreiter sequences.
   
  Note that the first $m$ row vectors of $C_{i}^{(m,m+1)}$ are the $m$-dimensional coefficient
  vectors of $x^{j}\alpha_{i}^{(m)} \pmod{x^m}$ and the final, 
   $(m+1)$-th row vector is equal to $(\alpha_{i}^{(m+1)}-a_{i,0})/x$. Supposing a nontrivial linear 
    combination of the row vectors of $C^{(m,m+1)}$ exists, then, for some
     $\lambda_{j}\in\FF_{q},\,j=0,\dots,m-1$ and for each $i=1,\dots,s$ we get
  \begin{align*}
    & \sum_{j=0}^{m-1} \lambda_{j} x^{j} \alpha_{i}^{(m)} = (\alpha_{i}^{(m+1)}-a_{i,0})/x &\pmod{x^{m}}\\
     \Leftrightarrow&
     \sum_{j=0}^{m-1} \lambda_{j} x^{j} \alpha_{i}^{} \equiv (\alpha_{i}^{}-a_{i,0})/x &\pmod{x^{m}}\\
     \Leftrightarrow&
     \sum_{j=0}^{m-1} \lambda_{j} x^{j+1} \alpha_{i}^{} \equiv \alpha_{i}^{}-a_{i,0} &\pmod{x^{m+1}}\\
     \Leftrightarrow&
   \Lambda^{(m)}(x) \alpha_{i}^{} \equiv a_{i,0} &\pmod{x^{m+1}}, \\
  \end{align*}
   for some nonzero polynomial $\Lambda^{(m)}(x)$. 
   But this can only occur if all $\alpha_{i}^{(m+1)}$ are constant multiples of each other. Requesting the $\alpha_{i}$ to be linearly independent over $\FF_{q}$, which is the only interesting case by Theorem~\ref{udt}, rules this out for large enough $m$, i.e., at least for large $m$ the condition \eqref{stern} of the corollary is satisfied. But this suffices, since the proof of Theorem \ref{thm9} actually shows how to construct $C^{(m+1)}$ uniquely, given
 $C^{(m,m+1)}$. Furthermore, any other $C'\in \FF_{q}^{\NN\times\NN}$ generating the same sequence would have to fulfill ${C'}_{i}^{(m)}={C}_{i}^{(m)}P^{(m)},\,i=1,\dots,s$, with $P^{(m)}$ being a NUT matrix.
 But this can never happen, if the $C_{i}$ are also NUT matrices and the $C'_{i}$ are Hankel matrices. Hence
 Larcher-Niederreiter sequences and hyperplane sequences have at most trivial common 
generator matrices and are distinct as constructions.

\section{An Existence Result}\label{secExRes}

In this section we prove an existence result which is based on Corollary~\ref{llnsc1}. In the following let $$\widetilde{Y}^s:=\{\bsalpha=(\alpha_1,\ldots,\alpha_s) \in Y^s\ : \ \alpha_1=1\}.$$

\begin{theorem}\label{thmex}
Let $\bsbeta=(\beta_m)_{m \ge 1}$ be such that $\sum_{m \ge 1}\beta_m< \infty$. Then there exists an integer $L=L(s,q,\bsbeta)$ and an $\bsalpha \in \widetilde{Y}^s$ such that the associated hyperplane sequence $\mathcal S_{\bsalpha}$ is a digital $(\Tfett,s)$-sequence over $\FF_q$ with $$\Tfett(m) \le \log_q \frac{m^{s-1}}{\beta_m} +L \ \ \mbox{ for all }\ \ m \ge 1.$$  
\end{theorem}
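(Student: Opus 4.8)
The plan is to produce $\bsalpha$ by a counting argument over $\widetilde Y^s=\{1\}\times Y^{s-1}$. Identifying $\alpha_i=\sum_{j\ge0}a_{i,j}x^j$ with its coefficient string, $Y$ becomes $\FF_q^{\ast}\times\FF_q^{\NN}$, so $\widetilde Y^s$ is compact, and for every $m$ there are exactly $((q-1)q^{m-1})^{s-1}$ admissible reductions $\bsalpha^{(m)}\in\{1\}\times R_m^{s-1}$ (those with invertible coordinates). Since $\sum_m\beta_m<\infty$, only finitely many $m$ have $\beta_m>1$; those will be handled at the end by enlarging $L$, so assume $0<\beta_m\le1$ from now on.

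First I would attach to each $m$ a target level $r_m$ for the figure of merit. From the estimates in the proof of Corollary~\ref{llnsc1} one extracts the bound $\Delta_q(s,\rho)\le\widetilde K_{s,q}\,\rho^{s-1}q^{\rho+1}$, valid for all $\rho\ge1$, with $\widetilde K_{s,q}$ depending only on $s$ and $q$. Put
\[
\rho_m:=\left\lfloor m-s+\log_q\beta_m+(s-1)\log_q(q-1)-\log_q\bigl(2\widetilde K_{s,q}\bigr)-(s-1)\log_q m\right\rfloor,
\]
and set $r_m:=s+\rho_m$ if $\rho_m\ge1$ and $r_m:=1$ otherwise; let $B_m$ be the set of admissible $\bsalpha^{(m)}$ with $\rho(\bsalpha^{(m)})<r_m$. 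The key claim is $|B_m|<\beta_m\,((q-1)q^{m-1})^{s-1}$ for every $m$. If $\rho_m\ge1$, then for $m$ larger than a bound depending only on $s,q$ one has $\rho_m\le m$, and the definition of $\rho_m$ forces $q^{\rho_m}\rho_m^{s-1}\le\frac{\beta_mq^{m-s}(q-1)^{s-1}}{2\widetilde K_{s,q}}\cdot\frac{\rho_m^{s-1}}{m^{s-1}}<\frac{\beta_mq^{m-s}(q-1)^{s-1}}{\widetilde K_{s,q}}$, hence $\Delta_q(s,\rho_m)<\beta_mq^m((q-1)/q)^{s-1}$; Theorem~\ref{llnsth1hypnet} applied with $\beta=\beta_m$ and parameter $\rho_m$ then yields more than $(1-\beta_m)((q-1)q^{m-1})^{s-1}$ admissible $\bsalpha^{(m)}$ with $\rho(\bsalpha^{(m)})\ge s+\rho_m=r_m$, i.e.\ $|B_m|<\beta_m((q-1)q^{m-1})^{s-1}$. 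If $\rho_m\le0$, then $r_m=1$ and $B_m=\emptyset$: for $\bsalpha\in\widetilde Y^s$ any nonzero $\bfk\in\mathcal N'_{\bsalpha^{(m)}}$ has at least two nonzero components (a unit $\alpha_i^{(m)}$ times a single nonzero polynomial of degree $<m$ cannot be $\equiv0\pmod{x^m}$), so $\delta_m(\cN_{\bsalpha^{(m)}})\ge2$ and hence, by~\eqref{figmer}, $\rho(\bsalpha^{(m)})\ge1$ always.

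Next, choose $m^{\ast}$ large enough that $\sum_{m\ge m^{\ast}}\beta_m<1$ and that $\rho_m\le m$ for all $m\ge m^{\ast}$, and for $m\ge m^{\ast}$ let $G_m:=\{\bsalpha\in\widetilde Y^s:\bsalpha^{(m)}\notin B_m\}$, a clopen subset of the compact space $\widetilde Y^s$. For any $M\ge m^{\ast}$ the number of admissible $\bsalpha^{(M)}$ whose truncation to some level $m\in[m^{\ast},M]$ lands in $B_m$ is at most $\sum_{m=m^{\ast}}^M|B_m|\,q^{(M-m)(s-1)}<\bigl(\sum_{m=m^{\ast}}^M\beta_m\bigr)((q-1)q^{M-1})^{s-1}<((q-1)q^{M-1})^{s-1}$, the total number of admissible $\bsalpha^{(M)}$; hence $\bigcap_{m=m^{\ast}}^MG_m\ne\emptyset$, and by compactness $\bigcap_{m\ge m^{\ast}}G_m\ne\emptyset$. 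Pick $\bsalpha$ in this intersection, so $\rho(\bsalpha^{(m)})\ge r_m$ for all $m\ge m^{\ast}$. By Corollary~\ref{co1}, $\mathcal S_{\bsalpha}$ is a digital $(\Tfett,s)$-sequence over $\FF_q$ with $\Tfett(m)=m-\rho(\bsalpha^{(m)})$. For $m\ge m^{\ast}$ with $\rho_m\ge1$ the definition of $\rho_m$ gives $\Tfett(m)\le m-s-\rho_m\le\log_q(m^{s-1}/\beta_m)+O_{s,q}(1)$; for $m\ge m^{\ast}$ with $\rho_m\le0$ one uses instead $\Tfett(m)\le m-1$ together with the bound $\log_q(m^{s-1}/\beta_m)>m-O_{s,q}(1)$ forced by $\rho_m\le0$; and for the finitely many remaining $m$ (those $<m^{\ast}$ and those with $\beta_m>1$) one uses only $\Tfett(m)\le m$ and $\log_q(m^{s-1}/\beta_m)\ge-\log_q\sup_k\beta_k$. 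Choosing $L=L(s,q,\bsbeta)$ to be a sufficiently large integer then gives $\Tfett(m)\le\log_q(m^{s-1}/\beta_m)+L$ for all $m\ge1$.

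The main obstacle, and the point needing care, is that Corollary~\ref{llnsc1} / Theorem~\ref{llnsth1hypnet} can only be invoked when the natural target for $\rho$ is a positive integer, and when $\beta_m$ decays very fast this target drops to zero or below (Corollary~\ref{llnsc1} as stated is only asserted for $m$ large with a threshold that degrades as $\beta_m\to0$). The device resolving this is the case split built into $r_m$: precisely in that regime one sets $r_m=1$ and relies only on the elementary facts $\rho(\bsalpha^{(m)})\ge1$ (automatic on $\widetilde Y^s$) and $\Tfett(m)\le m$, observing that then $\log_q(m^{s-1}/\beta_m)$ already exceeds $m-O_{s,q}(1)$, so the asserted bound holds with room to spare. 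The remaining work — extracting $\Delta_q(s,\rho)\le\widetilde K_{s,q}\rho^{s-1}q^{\rho+1}$, checking $m-r_m\le\log_q(m^{s-1}/\beta_m)+O_{s,q}(1)$, and collecting the finitely many exceptional indices into $L$ — is routine bookkeeping.
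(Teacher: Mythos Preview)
Your proof is correct and follows the same Borel--Cantelli strategy as the paper, only phrased via counting and compactness on $\widetilde Y^s$ rather than via the probability measure $\mu_s$ the paper introduces (these are equivalent here, since $\mu_s$ is just normalized counting on cylinder sets). Your explicit case split on $\rho_m\le 0$ is in fact more careful than the paper, which asserts that Corollary~\ref{llnsc1} holds with a threshold $m_0=m_0(s,q)$ independent of $\beta$; that claim is true precisely because the lower bound on $\rho(\bsalpha^{(m)})$ becomes vacuous once the target drops below $1$, which is exactly what your split handles.
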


For example if we choose $\bsbeta$ such that $\beta_m=\frac{1}{m(\log m)^2}$ for $m \ge 2$, then we obtain the following corollary:
\begin{corollary}\label{co2}
There exists an integer $L$ and an $\bsalpha \in \widetilde{Y}^s$ such that the associated hyperplane sequence $\mathcal S_{\bsalpha}$ is a digital $(\Tfett,s)$-sequence over $\FF_q$ with $$\Tfett(m) \le s \log_q m + 2 \log_q \log m  +L \ \ \mbox{ for all }\ \ m \ge 2.$$ 
\end{corollary}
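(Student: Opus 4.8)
The plan is to derive this directly from Theorem~\ref{thmex} by specializing the free sequence $\bsbeta$ to the indicated choice. First I would pin down the sequence completely: set $\beta_m = 1/(m(\log m)^2)$ for $m \ge 2$ and fix $\beta_1$ to be any finite positive value, say $\beta_1 = 1$. The separate treatment of $m=1$ is necessary only because $\log 1 = 0$ makes the formula $1/(m(\log m)^2)$ meaningless there; it does not affect the conclusion, which is claimed only for $m \ge 2$.

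Next I would verify the single hypothesis of Theorem~\ref{thmex}, namely $\sum_{m \ge 1} \beta_m < \infty$. Since $\beta_1$ contributes only a finite term, this reduces to the convergence of $\sum_{m \ge 2} 1/(m(\log m)^2)$, which follows from the integral comparison $\int_2^\infty \mathrm{d}x/(x(\log x)^2) = 1/\log 2 < \infty$ (equivalently, by Cauchy condensation the series is bounded by a constant multiple of $\sum_{k}1/k^2$). Hence Theorem~\ref{thmex} applies and yields an integer $L = L(s,q,\bsbeta)$ together with an $\bsalpha \in \widetilde{Y}^s$ whose associated hyperplane sequence is a digital $(\Tfett,s)$-sequence over $\FF_q$ satisfying $\Tfett(m) \le \log_q(m^{s-1}/\beta_m) + L$ for all $m \ge 1$.

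The remaining step is a direct substitution. For $m \ge 2$ one has
$$\frac{m^{s-1}}{\beta_m} = m^{s-1}\cdot m(\log m)^2 = m^s (\log m)^2,$$
so that, taking $\log_q$ and using $\log_q((\log m)^2)=2\log_q\log m$,
$$\log_q \frac{m^{s-1}}{\beta_m} = s\log_q m + 2\log_q\log m.$$
Inserting this into the bound from Theorem~\ref{thmex} gives $\Tfett(m) \le s\log_q m + 2\log_q\log m + L$ for all $m \ge 2$, with the same $L$, which is exactly the asserted inequality. Note that the base of the inner logarithm is irrelevant, since the same $\log m$ appears in the definition of $\beta_m$ and in the target bound.

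Because the argument is a pure specialization of the already-proved Theorem~\ref{thmex}, there is no genuine obstacle. The only points requiring a moment's care are the ad hoc definition of $\beta_1$ (to avoid the singularity of $\log m$ at $m=1$) and the convergence check for $\sum_{m\ge 2}1/(m(\log m)^2)$, both of which are routine.
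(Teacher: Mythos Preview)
Your proof is correct and follows exactly the approach indicated in the paper, which simply states that the corollary is obtained from Theorem~\ref{thmex} by choosing $\beta_m = 1/(m(\log m)^2)$ for $m\ge 2$. Your additional care with $\beta_1$ and the explicit convergence check just make rigorous what the paper leaves implicit.
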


\begin{corollary}\label{co3}
There exists a $\bsalpha \in \widetilde{Y}^s$ such that the star discrepancy of the associated hyperplane sequence $\mathcal S_{\bsalpha}$ satisfies $$D_N^{\ast}(\mathcal S_{\bsalpha}) =O\left(\frac{(\log N)^{2s}(\log \log N)^{2}}{N}\right)\ \ \mbox{ for all } N \ge 3,$$ where the implied constant depends only on $s$ and $q$. 
\end{corollary}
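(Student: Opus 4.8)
The plan is to derive the star-discrepancy bound from the quality-function bound in Corollary~\ref{co2} by invoking the standard general-purpose estimate for the star discrepancy of a $(\Tfett,s)$-sequence in terms of its quality function. Such a bound (see, e.g., \cite[Theorem~4.70 or Chapter~4]{DP2010}) states that for the first $N$ terms of a $(\Tfett,s)$-sequence in base $q$ one has
\[
N D_N^{\ast}(\mathcal S_{\bsalpha}) \le C(s,q)\, q^{\Tfett(m)}\, \sum_{j=0}^{m} \binom{j+s-1}{s-1} \ll_{s,q} q^{\Tfett(m)}\, m^{s},
\]
where $m = \lceil \log_q N\rceil$, so that for $N\ge 3$ we may take $m \asymp \log N$. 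Thus the whole task reduces to substituting the bound $\Tfett(m) \le s\log_q m + 2\log_q\log m + L$ from Corollary~\ref{co2} and simplifying.

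First I would fix $\bsalpha\in\widetilde Y^s$ as furnished by Corollary~\ref{co2}, so that $\Tfett(m)\le s\log_q m + 2\log_q\log m + L$ for all $m\ge 2$. Next, for $N\ge 3$ set $m=\lceil\log_q N\rceil$ (so $m\ge 2$ and $m = O(\log N)$, with $q^{m} = O(N)$). Then $q^{\Tfett(m)} \le q^{L}\, m^{s}(\log m)^{2}$. Plugging this into the discrepancy estimate above gives
\[
N D_N^{\ast}(\mathcal S_{\bsalpha}) \ll_{s,q} m^{s}(\log m)^{2}\cdot m^{s} = m^{2s}(\log m)^{2}.
\]
Finally, replacing $m$ by its order of magnitude $\log N$ (and $\log m$ by $\log\log N$, valid once $N$ is large; for the finitely many remaining $N\ge 3$ one absorbs everything into the implied constant) yields
\[
D_N^{\ast}(\mathcal S_{\bsalpha}) \ll_{s,q} \frac{(\log N)^{2s}(\log\log N)^{2}}{N},
\]
which is the claim.

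The only genuine point requiring care — and the step I would expect a referee to scrutinize — is the precise form of the general discrepancy-versus-quality-function bound being quoted: one must make sure it is the version that applies to a $(\Tfett,s)$-sequence (as opposed to a fixed $(t,s)$-sequence or an individual net), that the sum $\sum_{j=0}^{m}\binom{j+s-1}{s-1}$ really is $O_s(m^{s})$, and that the relation between $N$, $m$, and $q^{m}$ is handled so that no spurious factor of $\log N$ is lost or gained. Once the correct reference bound is in place, the remainder is the routine substitution sketched above.
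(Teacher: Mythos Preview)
Your proposal is correct and matches the paper's own proof, which is simply ``Combine Corollary~\ref{co2} with the star discrepancy bound given in \cite[Eq.~(5.5), p.~196]{DP2010}.'' The only discrepancy is the precise citation: the paper points to Eq.~(5.5) on p.~196 of \cite{DP2010} (a bound for $(\Tfett,s)$-sequences that sums $q^{\Tfett(j)}$ times a polynomial factor over $j\le\lfloor\log_q N\rfloor$) rather than Theorem~4.70, but after substituting $q^{\Tfett(j)}\ll_{s,q} j^s(\log j)^2$ the resulting estimate is the same $(\log N)^{2s}(\log\log N)^2/N$ you obtain.
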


\begin{proof}
Combine Corollary~\ref{co2} with the star discrepancy bound given in \cite[Eq. (5.5), p. 196]{DP2010}. 
\end{proof}

For the proof of Theorem~ \ref{thmex} we need some notation: Let $Y_m=\{\alpha^{(m)}\, : \, \alpha \in Y\}$. Note that $|Y_m|=q^{m-1}(q-1)$. We define now a probability measure over the set $Y$. We do this in a way such that the measure of corresponding vectors in $Y_m$ is equiprobable. For $m \in \NN$ let $\mu_m$ be the equiprobable measure on the set $Y_m$. We say a subset $A$ of $Y$ is of finite type, if there exists an integer $m=m(A) \in \NN$ and a subset $A'$ of $Y_m$ such that $$A=\{\alpha \in Y\, : \, \alpha^{(m)} \in A'\}.$$ The measure of the finite type subset $A$ is then defined as $$\mu(A)=\mu_m(A')=\frac{|A'|}{q^{m-1}(q-1)}.$$  
(Of course, the number $m=m(A)$ is not uniquely defined by $A$. However, it is easy to see that the definition of $\mu$ does not depend on the specific choice of $m$.)  

For $s \in \NN$ let $\mu^{(s)}$ denote the complete product measure on $Y^s$ induced by $\mu$. 

Now we are ready to prove Theorem~\ref{thmex}.

\begin{proof}
Let $$\mathcal U_m(\beta)=\left\{\bsalpha \in \widetilde{Y}^s\, : \, \rho(\bsalpha^{(m)}) \ge \left\lfloor m +\log_q \beta- (s- 1) (\log_q m -1)+ \log _q \frac{(s-1)!}{q^{s-1}}\right\rfloor\right\}.$$ Then it follows from 
Corollary~\ref{llnsc1} that there exists some $m_0=m_0(s,q)$ such that for all $0 < \beta \le 1$ and all $m \ge m_0$ we have $$\mu_s(\mathcal U_m(\beta)) > 1-\beta.$$

Let $\overline{\mathcal U}_m(\beta)$ be the complement of $\mathcal U_m(\beta)$. Let $\bsbeta=(\beta_m)_{m \ge 1}$ be such that $\sum_{m \ge 1}\beta_m< \infty$. Let $n_0=n_0(s,q,\bsbeta)$ be such that $n_0 \ge m_0$ and $\sum_{m \ge n_0}\beta_m\le 1$. Then we have
\begin{eqnarray*}
\mu_s\left(\bigcap_{m \ge n_0}\mathcal U_m(\beta_m)\right) & = & 1-\mu_s\left(\bigcup_{m \ge n_0}\overline{\mathcal U}_m(\beta_m)\right) \\
& \ge & 1-\sum_{m \ge n_0} \mu_s(\overline{\mathcal U}_m(\beta_m))\\
& > & 1-\sum_{m \ge n_0} \beta_m\\
& \ge & 0.
\end{eqnarray*}

Hence there exists a $\bsalpha \in \widetilde{Y}^s$ such that $$\rho(\bsalpha^{(m)}) \ge \left\lfloor m +\log_q\beta_m- (s- 1) (\log_q m -1)+ \log _q \frac{(s-1)!}{q^{s-1}}\right\rfloor\ \ \mbox{ for all }\ \ m \ge n_0.$$ From this it follows that there exists some integer $L=L(s,q,\bsbeta)$  and there exists a $\bsalpha \in \widetilde{Y}^s$ such that $$\rho(\bsalpha^{(m)}) \ge \left\lfloor m +\log_q\beta_m- (s- 1)\log_q m -L\right\rfloor \ \ \mbox{ for all }\ \ m \ge 1.$$

Now the result follows from Corollary~\ref{co1}.
\end{proof}

\begin{small}
\noindent\textbf{Authors' address:}\\
\noindent Friedrich Pillichshammer, Gottlieb Pirsic\\
Institut f\"{u}r Finanzmathematik, Universit\"{a}t Linz, Altenbergerstr.~69, 4040 Linz, Austria\\
E-mail: 
\texttt{friedrich.pillichshammer@jku.at}, \texttt{gpisic@gmail.com} 
\end{small}


\begin{thebibliography}{99}
\bibitem{BES02} J.~Bierbrauer, Y.~Edel, and W. Ch.~Schmid: Coding-theoretic constructions for $(t,m,s)$-nets and ordered orthogonal arrays. J. Combin. Des., {\bf 10}: 403--418, 2002.

\bibitem{DN2009} J.~Dick and H.~Niederreiter: Duality theory for digital sequences. J. Complexity, 25: 406--414, 2009.

\bibitem{DP2010} J.~Dick and F.~Pillichshammer: {\it Digital Nets and Sequences. Discrepancy Theory and Quasi-Monte Carlo Integration}. Cambridge University Press, Cambridge, 2010.

\bibitem{FauTez}
H.~Faure and  S.~Tezuka: 
Another random scrambling of digital $(t,s)$-sequences. In {\it  Monte Carlo and Quasi-Monte Carlo Methods 2000}, pages 242--256, Springer, Berlin, 2002. 


\bibitem{LarNie} G.~Larcher and H.~Niederreiter: Kronecker-type sequences and nonarchimedean diophantine approximations. Acta Arith., 63: 379--396, 1993.

\bibitem{LN95} G.~Larcher and H.~Niederreiter: Generalized $(t,s)$-sequences, Kronecker-type sequences, and diophantine approximations of formal Laurent series. Trans. Amer. Math. Soc., 347: 2051--2073, 1995.

\bibitem{nie87} H.~Niederreiter: Point sets and sequences with small discrepancy. Monatsh. Math., 104: 273--337, 1987.

\bibitem{niesiam}  H.~Niederreiter, {\it Random Number Generation and Quasi-Monte Carlo Methods}. {\bf 63}. CBMS-NSF Series in Applied Mathematics. SIAM, Philadelphia, 1992.

\bibitem{Ni2004}  H.~Niederreiter: Digital nets and coding theory. In {\it Coding, Cryptography and Combinatorics}, pages 247--257. Birkh\"auser, Basel, 2004.

\bibitem{N05} H.~Niederreiter: Constructions of $(t,m,s)$-nets and $(t,s)$-sequences. Finite Fields Appl., 11: 578--600, 2005.

\bibitem{N06a} H.~Niederreiter: Nets, $(t,s)$-sequences and codes. In {\it Monte Carlo and Quasi-Monte Carlo Methods 2006}, pages 83--100, Springer, Berlin, 2008.

\bibitem{NieOe04} H.~Niederreiter and F.~\"Ozbudak: Matrix-product constructions of digital nets. Finite Fields Appl., 10: 464--479, 2004.
     
\bibitem{np} H.~Niederreiter and G.~Pirsic: Duality for digital nets and its applications. Acta Arith., 97: 173--182, 2002.     

\bibitem{NiePir02} H.~Niederreiter and G.~Pirsic: A Kronecker product construction for digital nets. In {\it Monte Carlo and Quasi-Monte Carlo Methods 2000}, pages 396--405, Springer, Berlin, 2002.

\bibitem{NieXi96} H.~Niederreiter and C.~Xing:
Low-discrepancy sequences and global function fields with many rational places.  
Finite Fields Appl. 2: 241--273, 1996.  

\bibitem{PP2009} F.~Pillichshammer and G.~Pirsic: The quality parameter of cyclic nets and hyperplane nets. Unif. Distrib. Theory, 4: 69--79, 2009.

\bibitem{cyczoo} G.~Pirsic: A small taxonomy of integration node sets.  \"Osterreich. Akad. Wiss. Math.-Natur. Kl. Sitzungsber. II (2005), 214: 133--140, 2006.

\bibitem{PDP} G.~Pirsic, J.~Dick and F.~Pillichshammer: Cyclic digital nets, hyperplane nets, and multivariate integration in Sobolev spaces. SIAM J. Numer. Anal., 44: 385--411, 2006.
\end{thebibliography}
\end{document}